\theoremstyle{plain}
\newtheorem{thm}{\protect\theoremname}[section]
  \theoremstyle{plain}
  \newtheorem{lem}[thm]{\protect\lemmaname}
\DeclareMathOperator{\dist }{dist}
\let\orgdescriptionlabel\descriptionlabel
\renewcommand*{\descriptionlabel}[1]{%
  \let\orglabel\label
  \let\label\@gobble
  \phantomsection
  \edef\@currentlabel{#1}%
  \let\label\orglabel
  \orgdescriptionlabel{#1}%
}
  \providecommand{\lemmaname}{Lemma}
\providecommand{\theoremname}{Theorem}
\begin{document}

\title{Classification of Blow-ups and Free Boundaries of Solutions to Unstable
Free Boundary Problems}

\author{Andreas Minne}

\maketitle
\selectlanguage{swedish}%
\global\long\def\dist{\operatorname{dist}}

\selectlanguage{english}%
\global\long\def\df{\mathrel{\mathop:}=}

\global\long\def\fd{=\mathrel{\mathop:}}

\begin{abstract}
In general, solutions $u$ to
\[
\Delta u(\mathbf{x})=f(\mathbf{x})\chi_{\{u>\psi\}}
\]
are not $C^{1,1}$, even for $f$ smooth and $\psi(\mathbf{x})\equiv0$.
Points around which $u$ is not $C^{1,1}$ are called \emph{singular
points}, and the set of all such points, the \emph{singular set}.
In this article we analyze blow-ups, the free boundary $\partial\{u>\psi\}$,
and the singular set close to singular points $\mathbf{x}^{0}=(x^{0},y^{0},z^{0})$
in $\mathbb{R}^{3}$. We show that blow-ups of the form 
\[
\lim_{j\to\infty}\frac{u(r_{j}\cdot+\mathbf{x}^{0})}{\|u\|_{L^{\infty}(B_{r_{j}}(\mathbf{x}^{0}))}},
\]
$r_{j}\to0^{+}$ are unique, the free boundary $\partial\{u>\psi\}$
is up to rotations close to the surfaces $(x-x^{0})^{2}+(y-y^{0})^{2}=2(z-z^{0})^{2}$
or $(x-x^{0})^{2}=(z-z^{0})^{2}$, and that singular points are either
isolated or contained in a $C^{1}$ curve. The methods of the proofs
are based on projecting the solutions $u$ on the space of harmonic
two-homogeneous polynomials.
\end{abstract}

\section{Introduction and Main Results}

The obstacle problem
\begin{equation}
\Delta u=\chi_{\{u>0\}},\label{eq:obstacle-problem}
\end{equation}
$u\ge0$, has been completely solved for quite some time (here $\chi_{\Omega}$
denotes the indicator function which is equal to one on $\Omega$
and zero outside) in the sense that questions concerning existence,
uniqueness, stability and regularity of solutions as well as the free
boundary have been well answered \cite{MR0318650,MR0454350}. However,
its evil twin

\begin{equation}
\Delta u=-\chi_{\{u>0\}},\label{eq:unstable-obstacle-problem}
\end{equation}
also called the \emph{unstable obstacle problem} has not until recently
received some attention. The author can see at least two reasons for
this: one is that it is quite new from a modelling perspective, and
another reason is that techniques that can handle the obstacle problem
turn out to fail for the unstable obstacle problem. Typical models
that, after simplification such as looking at the stationary case,
yield \eqref{eq:unstable-obstacle-problem} are those of solid combustion
and composite membranes. We refer to \cite{MR2286633} and the references
therein for a more complete description of the origin of \eqref{eq:unstable-obstacle-problem}.

One qualitative difference between \eqref{eq:obstacle-problem} and
\eqref{eq:unstable-obstacle-problem} is uniqueness of solutions,
and can already be illustrated in one dimension by noting that $u=\frac{x}{2}(1-x)$
and $u=0$ both solve $u''=-\chi_{\{u>0\}}$ in $(0,1)$ with $u(0)=u(1)=0$.
Another important distinction is the difference in regularity: it
is well-known in the free boundary community that even though the
right hand side in \eqref{eq:obstacle-problem} is only bounded, for
which $C^{1,\alpha}$ holds in general (and not better) where $0<\alpha<1$,
one can show that solutions are $C^{1,1}$. On the other hand it has
been proven in \cite{MR2289547,ASW2} that there exist solutions in
two dimensions and higher to \eqref{eq:unstable-obstacle-problem}
that have unbounded $C^{1,1}$ norm. Note however that the term ``obstacle''
in unstable obstacle problem is somewhat misleading since the solution
is allowed to both lie below and above the ``obstacle'' function
(which is zero) unlike the classical obstacle problem where $u\ge0$
is imposed.

Perhaps more interesting than regularity of solutions is the regularity
of the free boundary $\partial\{u>0\}$. For the obstacle problem,
the free boundary is analytic around ``thick'' points $\mathbf{x}^{0}$,
where blow-ups, i.e., limits 
\[
v(\mathbf{x})\df\lim_{r\to\infty}\frac{u(r\mathbf{x}+\mathbf{x}^{0})}{r^{2}}
\]
are halfspace solutions of the form $\frac{1}{2}(\mathbf{x}\cdot\mathbf{e}_{\mathbf{x}^{0}})_{+}^{2}$
for directions $\mathbf{e}_{\mathbf{x}^{0}}\in\partial B_{1}$, while
it is not so easy to classify around ``thin'' a.k.a. singular points
where blow-ups are two-homogeneous polynomials. In two dimensions
it can be a cusp \cite{MR0516201}, and in general the set of singular
points can be included in a union of $C^{1}$ manifolds.

For the unstable obstacle problem, the singular points are analysed
in \cite{ASW1,ASW2}. We would also like to warn the reader that the
term ``singular'' for \eqref{eq:obstacle-problem} means points
$\mathbf{x}^{0}$ for which blow-ups are two-homogeneous polynomials
while for \eqref{eq:unstable-obstacle-problem} it is used for points
$\mathbf{x}^{0}$ such that $u\not\in C^{1,1}(B_{r}(\mathbf{x}^{0}))$
for any $r>0$. Because of this lack of regularity, limits of $\frac{u(r\mathbf{x}+\mathbf{x}^{0})}{\|u\|_{L^{\infty}(B_{r}(\mathbf{x}^{0}))}}$
are considered (since $|\frac{u(r\mathbf{x}+\mathbf{x}^{0})}{r^{2}}|$
diverges as $r\to0^{+}$) and can via the Weiss monotonicity formula
be shown to be a harmonic two-homogeneous polynomial \cite{MR2286633}.

This article will answer whether the results in \cite{ASW2} can be
extended when perturbing the right hand side both with respect to
the coefficient in front of the indicator function, and the level
set. In other words we answer what the blow-up limits at singular
points (as used for the unstable obstacle problem), the singular set,
and free boundary $\partial\{u>\psi\}$ of solutions $u$ to 
\begin{equation}
\Delta u(\mathbf{x})=f(\mathbf{x})\chi_{\{u>\psi\}}\label{eq:main}
\end{equation}
look like locally, given that $f$ is Dini-continuous, $\psi\in C^{1,\alpha}(B_{1})$,
and $|\frac{\psi(r\mathbf{x}+\mathbf{x}^{0})}{r^{2}}|$ is bounded
uniformly with respect to $r$ for singular points $\mathbf{x}^{0}$.
More precisely, we show that given a bounded solution $u$ to \eqref{eq:main}
in $B_{1}\subset\mathbb{R}^{3}$,
\begin{description}
\item [{(i)}] blow-ups of $u$ of the form $\lim_{j\to\infty}\frac{u(r_{j}\mathbf{x}+\mathbf{x}^{0})}{\|u\|_{L^{\infty}(B_{r_{j}}(\mathbf{x}^{0}))}}$
at singular points $\mathbf{x}^{0}=(x^{0},y^{0},z^{0})$ are unique
and, up to a rotation, of the form
\begin{gather*}
\pm\left(\frac{(x-x^{0})^{2}+(y-y^{0})^{2}}{2}-(z-z^{0})^{2}\right),\\
\text{ or }(x-x^{0})^{2}-(z-z^{0})^{2},
\end{gather*}
cf. Theorem \ref{thm:symmetric-blowups}, \ref{thm:blowups-unique-1}
and \ref{thm:unstable-case}.
\item [{(ii)}] if the blow-up at a singular point ${\bf x}^{0}$ is of
the form $\frac{(x-x^{0})^{2}+(y-y^{0})^{2}}{2}-(z-z^{0})^{2}$ or
$-\left(\frac{(x-x^{0})^{2}+(y-x^{0})^{2}}{2}-(z-z^{0})^{2}\right)$,
then the free boundary $\partial\{u>\psi\}$ looks locally like the
cone
\[
\frac{(x-x^{0})^{2}+(y-y^{0})^{2}}{2}=(z-z^{0})^{2}
\]
up to a $C^{1}$ perturbation, cf. Theorem \ref{thm:cone-blowups}.
Furthermore, such a singular point is isolated, cf. Theorem \ref{thm:structure-singularset}.
\item [{(iii)}] if the blow-up at a singular point ${\bf x}^{0}$ is of
the form $(x-x^{0})^{2}-(z-z^{0})^{2}$, then the set of singular
points is locally contained in a $C^{1}$-curve, cf. Theorem \ref{thm:structure-singularset}.
Furthermore, $\{u=\psi\}\cap K_{c}$ is for any $c>0$ locally contained
in two $C^{1}$-manifolds, intersecting orthogonally, where $K_{c}=\{y^{2}<c(x^{2}+z^{2})\}$,
cf. Theorem \ref{thm:unstable-case}.
\end{description}
\begin{figure}
\centering{}\includegraphics[width=1\linewidth]{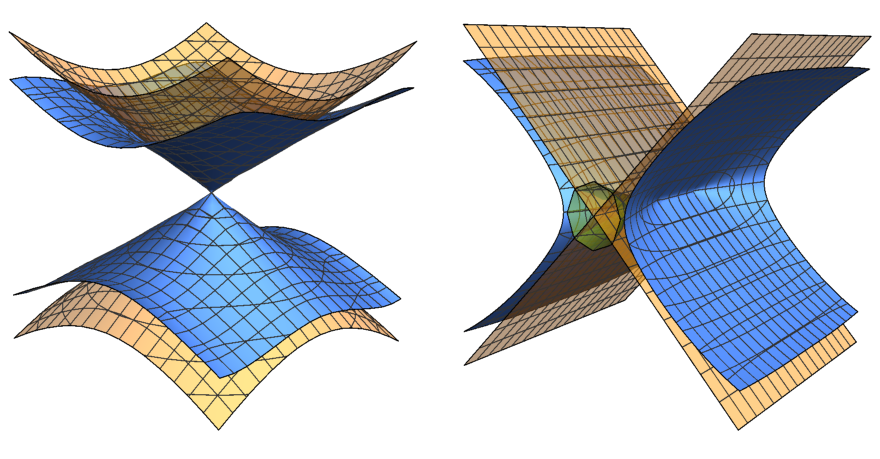}\caption{Illustration of (ii) and (iii); The behaviour inside the cusp is unclear. }
\end{figure}

Note that even though the main results are given above, these follow
by the same proof methods as in \cite{ASW2} with some key lemmas
replaced. These lemmas are presented and proved in §\ref{sec:Growth-of-Solutions}
and can be considered the substantial new contributions by the author.
The problem \eqref{eq:main} is an example of a class of problems
of the type $\Delta u=f(x,u)$ where $f$ is discontinuous in $u$.

Finally we remark that the analysis mostly takes place in $\mathbb{R}^{3}$,
but many of the results would be true in higher dimensions as well
(following \cite{MR3034585} instead of \cite{ASW2}).

The rest of the paper has the following structure: §\ref{sec:Setup-and-Notation}
introduces the notation, and also the assumptions that nonetheless
will be repeated in the lemmas, theorems and corollaries. §\ref{sec:Background-Preliminaries}
presents some important auxiliary results not proven by the author,
while the main lemmas are proven in §\ref{sec:Growth-of-Solutions}
from which the results, presented in §\ref{sec:Extension-of-Results}
follows.

\paragraph{Acknowledgements}

The author sincerely thanks Henrik Shahgholian for introducing the
problem, and John Andersson for patient guidance through some of the
hard technicalities in \cite{ASW2}.

\section{Setup and Notation\label{sec:Setup-and-Notation}}

As mentioned in the introduction we consider \eqref{eq:main}, 
\begin{align*}
\Delta u(\mathbf{x}) & =f(\mathbf{x})\chi_{\{u>\psi\}}
\end{align*}
in the ball $B_{1}\subset\mathbb{R}^{n}$ centered around the origin.
Here, $f$ will be assumed to be Dini continuous with modulus of continuity
$\omega_{f}$. The function $\chi_{\Omega}$ is the characteristic
function of the set $\Omega$, 
\[
\chi_{\Omega}\df\begin{cases}
1, & \mathbf{x}\in\Omega,\\
0 & \mathbf{x}\in\Omega^{c}.
\end{cases}
\]
The set $\{u>\psi\}$ is shorthand for the set $\{\mathbf{x}\in B_{1}\,:\,u(\mathbf{x})>\psi(\mathbf{x})\}$.
The analysis often occurs in $\mathbb{R}^{3}$, for which points are
denoted by $\mathbf{x}=(x,y,z)$.

It will also be convenient to introduce the notation
\begin{equation}
h_{r,\mathbf{x}^{0}}(\mathbf{x})\df\frac{h(r\mathbf{x}+\mathbf{x}^{0})}{r^{2}},\label{eq:blowupdefinition}
\end{equation}
and it will be assumed throughout that $|\psi_{r}|\le C$ for some
universal constant $C>0$, the reason for which will be apparent in
the proof of Lemma \ref{lem:Pi-growth-at-singular-points-nonconstant}
and Lemma \ref{lem:ur-Pi-decay}. A \emph{singular point} $\mathbf{x}^{0}$
of a solution $u$ of \eqref{eq:main} is a point such that, for any
$r>0$, $u\not\in C^{1,1}(B_{r}(\mathbf{x}^{0}))$, and the set of
all such points, the \emph{singular set}, is denoted by $S^{u}$.
Because of the translation invariance of the Laplacian, we often fix
$\mathbf{x}^{0}=0$, and let $u_{r}\df u_{r,0}$ to decrease the notational
burden. A consequence of this definition is that functions, other
than polynomials, depending on a parameter differently than in \eqref{eq:blowupdefinition}
will have the dependence written as a superscript rather than a subscript.

The analysis will be done in the interior, hence no boundary conditions
will be given. The solution will however be assumed globally bounded,
say by $M>0$. An important tool will be the following operator $\Pi:W^{2,2}(B_{1})\times(0,s)\times B_{1-s}\to\mathcal{\mathbb{HP}}_{2}$,
where $\mathcal{\mathbb{HP}}_{2}$ is the space of homogeneous polynomials
of degree two, 
\begin{align}
 & \frac{1}{|B_{r}(\mathbf{x}^{0})|}\int_{B_{r}(\mathbf{x}^{0})}\Big|\frac{D^{2}u(r\mathbf{y})}{r^{2}}-D^{2}\Pi(u,r,\mathbf{x}^{0})\Big|^{2}\,d\mathbf{y}\nonumber \\
 & =\inf_{p\in\mathbb{HP}_{2}}\frac{1}{|B_{r}(\mathbf{x}^{0})|}\int_{B_{r}(\mathbf{x}^{0})}\Big|\frac{D^{2}u(r\mathbf{y})}{r^{2}}-D^{2}p\Big|^{2}\,d\mathbf{y}.\label{eq:Pi-def}
\end{align}
Here $|B_{r}(\mathbf{x}^{0})|$ denotes the $n$-dimensional Lebesgue
measure of $B_{r}(\mathbf{x}^{0})$, $D^{2}u$ the Hessian of $u$,
and $W^{2,2}(B_{1})$ the usual Sobolev space. Again to ease the notation,
$\Pi(u,r)\df\Pi(u,r,0)$. Some properties of $\Pi$ can be found in
§\ref{sec:Background-Preliminaries}.

An auxiliary problem of interest is

\begin{equation}
\begin{cases}
\Delta Z_{p}=-\chi_{\{p>0\}} & \text{in }\mathbb{R}^{n},\\
Z_{p}(0)=\nabla Z_{p}(0)=0,\\
\lim_{|\mathbf{x}|\to\infty}\frac{Z_{p}(\mathbf{x})}{|\mathbf{x}|^{3}}=\Pi(Z_{p},1)=0.
\end{cases}\label{eq:blow-up-problem}
\end{equation}
where the function $p$ lies in $\mathbb{HP}_{2}$.

The coordinate system is in the analysis often rotated; we therefore
introduce $\mathcal{R}$ to be the set of all rotations in $\mathbb{R}^{3}$.

\section{Preliminaries\label{sec:Background-Preliminaries}}

The original problem 
\[
\Delta u=-\chi_{\{u>0\}}
\]
 was first introduced to the free boundary community by Monneu and
Weiss \cite{MR2286633}. There it was correctly conjectured that there
exists singular points for solutions $u$, i.e., points $\mathbf{x}^{0}$
such that $u$ is not $C^{1,1}$ in a neighbourhood of $\mathbf{x}^{0}$.
This was later proved by Andersson and Weiss \cite{MR2289547}, and
a small cascade of papers followed by Andersson, Shahgholian and Weiss
\cite{ASW1,ASW2,MR3034585} where the structure of the singular set
is analyzed in detail.

Many of the tools used in these papers apply equally well to our problem
\[
\Delta u=f\chi_{\{u>\psi\}},
\]
and are presented below, including references to the proofs. First
we will recall some properties of $\Pi$ introduced in the previous
section, and can be found in \cite[Lemma 3.6]{ASW2}:
\begin{lem}
\label{lem:Projection-properties}Let $\Pi(u,r,\mathbf{x}^{0})$ be
the two-homogeneous polynomial given by \eqref{eq:Pi-def}. Then
\begin{description}
\item [{(i)}] $\Pi$ is a projection operator, i.e., $\Pi(\Pi(u,r),r)=\Pi(u,r)$.
\item [{(ii)}] $\Pi(h,r)=\Pi(h,s)$ for any harmonic function function
$h$ and $0<r$, $s<1$. 
\end{description}
\end{lem}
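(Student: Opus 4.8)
The statement to prove is Lemma \ref{lem:Projection-properties}, which asserts two properties of the projection operator $\Pi$ onto $\mathbb{HP}_2$ defined via minimizing the $L^2$ distance of Hessians: (i) idempotence, and (ii) invariance under the radius parameter when applied to harmonic functions.

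\medskip

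\noindent\textbf{Proof proposal.}

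The plan is to exploit the Hilbert space structure hiding in the definition \eqref{eq:Pi-def}. For fixed $r$ and $\mathbf{x}^0$, consider the finite-dimensional space $\mathbb{HP}_2$ of two-homogeneous polynomials, and map each $p\in\mathbb{HP}_2$ to its (constant) Hessian $D^2 p$, a symmetric $n\times n$ matrix; this map is linear and injective, so $\mathbb{HP}_2$ is linearly isomorphic to the space $\mathrm{Sym}(n)$ of symmetric matrices, which carries the inner product $\langle A,B\rangle = \sum_{ij} A_{ij}B_{ij}$. The right-hand side of \eqref{eq:Pi-def} is then exactly $|B_r(\mathbf{x}^0)|^{-1}\int_{B_r(\mathbf{x}^0)} |M(\mathbf{y}) - D^2 p|^2\,d\mathbf{y}$ where $M(\mathbf{y}) = r^{-2}D^2 u(r\mathbf{y})$, i.e. the squared distance in $L^2(B_r(\mathbf{x}^0);\mathrm{Sym}(n))$ from the fixed element $M$ to the finite-dimensional subspace of constant matrices coming from $\mathbb{HP}_2$ — which is all of $\mathrm{Sym}(n)$. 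Hence $\Pi(u,r,\mathbf{x}^0)$ is the unique orthogonal projection and is characterized by $D^2\Pi(u,r,\mathbf{x}^0) = \fint_{B_r(\mathbf{x}^0)} M(\mathbf{y})\,d\mathbf{y}$, the average of the (rescaled) Hessian. First I would record this explicit formula, since both parts follow from it almost immediately.

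For part (i): applying the formula to $\Pi(u,r)$ in place of $u$, the rescaled Hessian of $\Pi(u,r)$ is the constant matrix $D^2\Pi(u,r)$ (two-homogeneous polynomials have constant Hessian, so rescaling does nothing), and the average of a constant is itself; thus $D^2\Pi(\Pi(u,r),r) = D^2\Pi(u,r)$, and since $\Pi$-values vanish to second order at the relevant point (they lie in $\mathbb{HP}_2$, no linear or constant term), equality of Hessians forces equality of the polynomials. For part (ii): if $h$ is harmonic then $\partial_{ij}h$ is harmonic for each $i,j$, so by the mean value property $\fint_{B_r(\mathbf{x}^0)} \partial_{ij}h(r\mathbf{y})\,d\mathbf{y} = \fint_{B_{r}(\mathbf{x}^0)}\partial_{ij}h = \partial_{ij}h(\mathbf{x}^0)$ — wait, more carefully, after the change of variables the average of $r^{-2}\partial_{ij}h(r\mathbf{y})$ over $\mathbf{y}\in B_1$ equals $r^{-2}\fint_{B_r}\partial_{ij}h(\mathbf{z})\,d\mathbf{z} = r^{-2}\partial_{ij}h(\mathbf{x}^0)$, which is manifestly independent of $r$ (here I am taking $\mathbf{x}^0 = 0$ as the lemma implicitly does, or keeping it general with the obvious bookkeeping). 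Since this holds for every entry, $D^2\Pi(h,r)$ is independent of $r$, hence so is $\Pi(h,r)$.

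The only mild subtlety — hardly an obstacle — is matching conventions: the definition \eqref{eq:Pi-def} writes the integrand as $r^{-2}D^2u(r\mathbf{y}) - D^2\Pi$, so one must be careful that $\Pi(u,r,\mathbf{x}^0)$ is built from the \emph{rescaled} function $u_{r,\mathbf{x}^0}$ and track the factor $r^{-2}$ consistently; once the explicit averaging formula $D^2\Pi(u,r,\mathbf{x}^0) = \fint_{B_1} r^{-2}D^2u(r\mathbf{y}+\mathbf{x}^0)\,d\mathbf{y}$ is written down correctly, everything is bookkeeping. Since the result is quoted from \cite[Lemma 3.6]{ASW2}, I would keep the proof short, essentially just (a) identify $\Pi$ with orthogonal projection and give the averaging formula, (b) deduce (i) from idempotence of averaging on constants, (c) deduce (ii) from the mean value property applied entrywise to the Hessian of a harmonic function.
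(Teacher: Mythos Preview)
The paper does not supply its own proof of this lemma; it merely cites \cite[Lemma~3.6]{ASW2}. Your strategy---identify $\Pi$ as the $L^2$-orthogonal projection of the (rescaled) Hessian onto the finite-dimensional space of constant symmetric matrices, write down the explicit averaging formula, then deduce (i) from idempotence of averaging on constants and (ii) from the mean value property applied entrywise to the harmonic functions $\partial_{ij}h$---is the natural and standard argument, and is almost certainly what the cited reference does.

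There is, however, a slip in your execution of (ii). You compute the average to be $r^{-2}\partial_{ij}h(\mathbf{x}^0)$ and call this ``manifestly independent of $r$'', which it plainly is not. The culprit is the stray $r^{-2}$ in the paper's display \eqref{eq:Pi-def}: since $D^2 u_{r}(\mathbf{y}) = D^2 u(r\mathbf{y})$ (the division by $r^2$ in the definition of $u_r$ is exactly cancelled by the two derivatives), the formula consistent with the paper's own stated identity $\Pi(u,r)=\Pi(u_r,1)$ is
\[
D^2\Pi(u,r,\mathbf{x}^0)\;=\;\fint_{B_1} D^2 u(r\mathbf{y}+\mathbf{x}^0)\,d\mathbf{y},
\]
with no extra factor of $r^{-2}$. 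With this correction the mean value property yields $D^2\Pi(h,r)=D^2 h(\mathbf{x}^0)$, which is genuinely independent of $r$, and your argument goes through cleanly. You rightly flagged the convention as the only subtlety; you just propagated the typo rather than resolving it.
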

It is also useful to note that, with the notation introduced in §\ref{sec:Setup-and-Notation},
$\Pi(u,r)=\Pi(u_{r},1)$ by the definition of $\Pi$.

The second tool is a result concerning Fourier expansions of solutions
to the Poisson equation $\Delta u=\sigma$ when the right hand side
is zero-homogeneous, i.e. $\sigma(r\mathbf{x})=\sigma(\mathbf{x})$
for all $r>0$.
\begin{thm}
[\cite{MR1444257}, Theorem 3.1] Let $u$ solve $\Delta u=\sigma$
for a zero-homogeneous function $\sigma$ such that \textup{where
$p_{j}$ are $j$-homogeneous harmonic polynomials, and assume }$u(0)=|\nabla u(0)|=\lim_{|x|\to\infty}\frac{u(\mathbf{x})}{|\mathbf{x}|^{3}}=0$\textup{.
Then 
\[
u(\mathbf{x})=\frac{a_{2}}{n+2}p_{2}\ln|\mathbf{x}|+|\mathbf{x}|^{2}\sum_{j\ne2}\frac{a_{j}}{(n+j)(j-2)}p_{j}(\mathbf{x}/|\mathbf{x}|).
\]
}
\[
\sigma(\mathbf{x})=\sum_{j=0}^{\infty}a_{j}p_{j}(\mathbf{x}/|\mathbf{x}|),
\]
where $p_{j}$ are $j$-homogeneous harmonic polynomials, and assume
$u(0)=|\nabla u(0)|=\lim_{|x|\to\infty}\frac{u(\mathbf{x})}{|\mathbf{x}|^{3}}=0$.
Then 
\[
u(\mathbf{x})=\frac{a_{2}}{n+2}p_{2}\ln|\mathbf{x}|+|\mathbf{x}|^{2}\sum_{j\ne2}\frac{a_{j}}{(n+j)(j-2)}p_{j}(\mathbf{x}/|\mathbf{x}|).
\]
\end{thm}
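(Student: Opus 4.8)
The plan is to reduce to a term-by-term analysis using spherical harmonics and then solve a family of Euler-type ODEs in the radial variable. First I would expand the zero-homogeneous right-hand side in spherical harmonics: since $\sigma(\mathbf{x})=\sigma(\mathbf{x}/|\mathbf{x}|)$ is a function on $S^{n-1}$, it decomposes as $\sigma=\sum_{j\ge0}a_j p_j(\mathbf{x}/|\mathbf{x}|)$ where $p_j$ are (degree-$j$ homogeneous) harmonic polynomials restricted to the sphere, with convergence in $L^2(S^{n-1})$. The corresponding solution $u$ should be sought in the form $u(\mathbf{x})=\sum_{j\ge0} g_j(|\mathbf{x}|)\,p_j(\mathbf{x}/|\mathbf{x}|)$. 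Writing the Laplacian in polar coordinates, $\Delta = \partial_{rr} + \frac{n-1}{r}\partial_r + \frac{1}{r^2}\Delta_{S^{n-1}}$, and using that $\Delta_{S^{n-1}} p_j = -j(j+n-2)p_j$, the equation $\Delta u = \sigma$ separates into the scalar ODEs
\[
g_j''(r) + \frac{n-1}{r} g_j'(r) - \frac{j(j+n-2)}{r^2} g_j(r) = a_j.
\]

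Next I would solve each ODE explicitly. The homogeneous solutions are $r^{j}$ and $r^{-(j+n-2)}$ (the two indicial roots of the Euler operator). For a particular solution one tries $g_j(r) = c_j r^2$; plugging in gives $c_j\big(2 + 2(n-1) - j(j+n-2)\big) = a_j$, i.e. $c_j\big((n+2) - j(j+n-2)\big)\cdots$ — more cleanly, $c_j = \dfrac{a_j}{(n+j)(2-j)}$ after factoring $2+2(n-1)-j(j+n-2) = -(j-2)(j+n)$. This works for all $j\ne2$, which explains the exponent $j-2$ in the denominator. For the resonant case $j=2$, $r^2$ is a homogeneous solution, so the particular solution picks up a logarithm: $g_2(r) = \frac{a_2}{?}\, r^2\ln r$, and matching constants gives the coefficient $\frac{a_2}{n+2}$. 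Then the growth/decay conditions $u(0)=|\nabla u(0)|=0$ and $\lim_{|\mathbf{x}|\to\infty} u(\mathbf{x})/|\mathbf{x}|^3 = 0$ select which homogeneous solutions may appear: the decaying branch $r^{-(j+n-2)}$ is killed by regularity at the origin for all $j$, and the growing branch $r^j$ is killed for $j\ge3$ by the decay at infinity and for small $j$ by the vanishing of $u$ and its gradient at $0$ (a constant or linear harmonic polynomial would survive otherwise). Assembling the surviving pieces yields exactly the stated formula.

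The main obstacle I anticipate is not the formal ODE computation but the \emph{justification of the termwise expansion and its convergence}: one must show that the series $\sum_j c_j r^2 p_j(\mathbf{x}/|\mathbf{x}|)$ (plus the log term) actually converges in an appropriate sense, defines a distributional/$W^{2,2}_{\mathrm{loc}}$ solution of $\Delta u=\sigma$, and that \emph{any} solution satisfying the three side conditions coincides with it. Uniqueness is delicate because the difference of two solutions is harmonic on $\mathbb{R}^n\setminus\{0\}$ with controlled growth, so one needs a Liouville-type argument: a harmonic function on $\mathbb{R}^n$ (the singularity at $0$ being removable once one checks $u$ is bounded near $0$, or at worst has a removable singularity given the $o(|\mathbf{x}|^3)$ growth and the $W^{2,2}$ regularity) that is $o(|\mathbf{x}|^3)$ at infinity must be a harmonic polynomial of degree $\le 2$, and the conditions at the origin then force it to vanish. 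I would handle the convergence by estimating $|a_j|$ via the smoothness/Dini-type regularity assumed on $\sigma$ restricted to the sphere — though as stated the theorem only assumes the expansion exists, so presumably the intended reading is that convergence holds in the same sense as the given expansion of $\sigma$, and the identity is to be understood accordingly — and cite standard elliptic regularity to upgrade $L^2$-convergence to the pointwise statement away from the origin.
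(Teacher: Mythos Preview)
The paper does not give its own proof of this statement: it is quoted verbatim from \cite{MR1444257} in the Preliminaries section as a background tool, with no argument supplied. There is therefore nothing to compare your attempt against within the paper itself.

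That said, your sketch is the standard and correct route: expand $\sigma$ in spherical harmonics, separate variables to reduce to the Euler ODEs
\[
g_j''+\tfrac{n-1}{r}g_j'-\tfrac{j(j+n-2)}{r^2}g_j=a_j,
\]
find the particular solution $c_jr^2$ for $j\ne2$ and $c\,r^2\ln r$ in the resonant case $j=2$, and use the three side conditions together with a Liouville argument to kill the homogeneous solutions $r^j$ and $r^{-(j+n-2)}$. Your concerns about convergence and uniqueness are the right ones to flag, and your outline for handling them (removable singularity at the origin, $o(|\mathbf{x}|^3)$ growth forcing a degree-$\le2$ harmonic polynomial, then the conditions $u(0)=|\nabla u(0)|=0$) is sound.

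One minor remark: your computation $c_j=a_j/\big((n+j)(2-j)\big)$ is actually the correct sign; the displayed formula in the paper (and perhaps in the original reference) has $(j-2)$ in the denominator, which is off by a sign, as one sees already at $j=0$ by checking $\Delta\big(\tfrac{a_0}{2n}|\mathbf{x}|^2\big)=a_0$.
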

In \cite{ASW2} the authors apply this result to the case $\sigma=-\chi_{\{p>0\}}$,
$p\in\mathbb{HP}_{2}$ so that
\[
Z_{p}(\mathbf{x})=\frac{a_{2}}{n+2}p_{2}\ln|\mathbf{x}|+|\mathbf{x}|^{2}\sum_{j\ne2}\frac{a_{j}}{(n+j)(j-2)}p_{j}(\mathbf{x}/|\mathbf{x}|).
\]
For reasons related to the analysis, and that will be apparent in
§\ref{sec:Extension-of-Results}, we introduce a parametrization of
harmonic polynomials in diagonal form with supremum norm one, $p_{\delta}\df\pm[(1/2+\delta)x^{2}+(1/2-\delta)y^{2}-z^{2}]$,
for $\delta\in[0,1/2]$. Since $\delta$ is unique, we can define
$\delta(u_{r})$ as the $\delta$ such that $\frac{\Pi(u_{r},1)}{\|\Pi(u_{r},1)\|_{L^{\infty}(B_{1})}}$
after rotations and relabeling of the coordiante axes are of the form
$p_{\delta}$. Choosing the basis $3x^{2}-|x|^{2}$, $3x^{2}-|y|^{2}$
and $3x^{2}-|z|^{2}$ for the axisymmetric second order polynomials,
it follows that
\begin{align}
 & \Pi(Z_{p_{\delta}},1/2)\nonumber \\
 & =\frac{(3A_{x}(\delta)-A(\delta))x^{2}+(3A_{y}(\delta)-A(\delta))z^{2}+(3A_{z}(\delta)-A(\delta))z^{2}}{\|3x^{2}-1\|_{L^{2}(\partial B_{1})}},\label{eq:Pi(Z,1/2)}
\end{align}
where the coefficients $A_{x}$, $A_{y}$, $A_{z}$ and $A$ are calculated
by
\begin{align*}
A_{x}(\delta) & =(-\chi_{\{p_{\delta}>0\}},x^{2})\df\int_{\partial B_{1}}-\chi_{\{p_{\delta}>0\}}x^{2}\,dS,\\
A_{y}(\delta) & =(-\chi_{\{p_{\delta}>0\}},y^{2})\df\int_{\partial B_{1}}-\chi_{\{p_{\delta}>0\}}y^{2}\,dS,\\
A_{z}(\delta) & =(-\chi_{\{p_{\delta}>0\}},z^{2})\df\int_{\partial B_{1}}-\chi_{\{p_{\delta}>0\}}z^{2}\,dS,\\
A(\delta) & =(-\chi_{\{p_{\delta}>0\}},1)\df\int_{\partial B_{1}}-\chi_{\{p_{\delta}>0\}}\,dS.
\end{align*}
What follows is a collection of properties involving the coefficients
of $\Pi(Z_{p_{\delta}},1/2)$ and an estimate of the growth of $|Cp_{\delta}+\Pi(Z_{p_{\delta}},1/2)|$
from below.
\begin{lem}
[\cite{ASW2}, Lemma 4.5-6]\label{lem:Coefficient-estimates}For $\delta\in(0,1/2)$,
$\eta_{0}$ and large enough $C$,
\begin{align*}
\sup_{B_{1}}|Cp_{\delta}+\Pi(Z_{p_{\delta}},1/2)| & \ge C+\eta_{0}.\\
(1+2\delta)\frac{3A_{y}(\delta)-A(\delta)}{3A_{x}(\delta)-A(\delta)}-1+2\delta & \le4\delta\\
(1+2\delta)\frac{3A_{y}(\delta)-A(\delta)}{3A_{x}(\delta)-A(\delta)}-1+2\delta & \ge2\delta,\qquad\delta\le c_{0}\\
(1+2\delta)\frac{3A_{y}(\delta)-A(\delta)}{3A_{x}(\delta)-A(\delta)}-1+2\delta & \ge c(\beta),\qquad\delta\in[\beta,1-\beta]
\end{align*}

\end{lem}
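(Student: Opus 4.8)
The plan is to reduce everything to one‑dimensional integrals over the spherical cap $E_\delta:=\{p_\delta>0\}\cap\partial B_1$, all four bounds resting on the single fact that the mean value of $z^2$ over $E_\delta$ stays strictly below $1/3$ for every $\delta\in[0,1/2]$. I would first normalise to the $+$ sign, $p_\delta=(\tfrac12+\delta)x^2+(\tfrac12-\delta)y^2-z^2$ (the $-$ sign only replaces $E_\delta$ by its complement, with all signs changing coherently), abbreviate $b_i:=\int_{E_\delta}(3x_i^2-1)\,dS$ for $x_i\in\{x,y,z\}$, and record that $b_x+b_y+b_z=0$ (since $x^2+y^2+z^2=1$ on $\partial B_1$), that $3A_{x_i}-A=-b_i$, and that by \eqref{eq:Pi(Z,1/2)} together with $\Pi(Z_{p_\delta},1)=0$ the polynomial $\Pi(Z_{p_\delta},1/2)$ is a fixed multiple of $b_xx^2+b_yy^2+b_zz^2$. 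Then I would prove two facts. \textbf{(a)} $\int_{E_\delta}(x^2-y^2)\,dS\ge0$: a symmetrisation comparing $E_\delta$ with its mirror image $E_{-\delta}$ across $\{x=y\}$; on $\{x^2\ge y^2\}$ one has $E_{-\delta}\subseteq E_\delta$, and the mirror map turns the $\{x^2<y^2\}$ part of the integral into a term that cancels $\int_{E_{-\delta}\cap\{x^2>y^2\}}(x^2-y^2)$, leaving $\int_{(E_\delta\setminus E_{-\delta})\cap\{x^2>y^2\}}(x^2-y^2)\ge0$. \textbf{(b)} $b_z<0$: projecting $E_\delta$ onto the $(x,y)$‑disc and using polar coordinates $(\rho,\theta)$, in which $E_\delta=\{\rho_0(\theta)<\rho<1\}$ with $\rho_0(\theta)^2=(\tfrac32+\delta\cos2\theta)^{-1}$, the radial integral against $\rho\,d\rho/\sqrt{1-\rho^2}$ evaluates in closed form, giving $b_z=-2\int_0^{2\pi}\sqrt{s_0(\theta)}\,(1-s_0(\theta))\,d\theta$ with $s_0(\theta)=(\tfrac12+\delta\cos2\theta)/(\tfrac32+\delta\cos2\theta)\ge0$. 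Together (a) and (b) give $\langle x^2\rangle_{E_\delta}>\tfrac13>\langle z^2\rangle_{E_\delta}$; in particular $b_x>0$, so $3A_x-A\ne0$ and $\frac{3A_y-A}{3A_x-A}=b_y/b_x$ is well defined throughout.

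The first inequality then follows by evaluating $Cp_\delta+\Pi(Z_{p_\delta},1/2)$ at a maximiser of $|p_\delta|$ on $B_1$, which is $(0,0,\pm1)$ — unique up to sign when $\delta<1/2$, and still among the maximisers when $\delta=1/2$ — where $p_\delta=-1$; there the value is $-C+\gamma_\delta$, with $\gamma_\delta$ the $z^2$‑coefficient of $\Pi(Z_{p_\delta},1/2)$, which by the Fourier expansion is a positive multiple of $b_z$ and hence negative by (b). So the value there has absolute value $C+|\gamma_\delta|$, and continuity of $\delta\mapsto\gamma_\delta$ on the compact interval $[0,1/2]$ furnishes a uniform $\eta_0>0$ for $C$ large. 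The second inequality is equivalent, using $b_x>0$, to $b_y\le b_x$, i.e.\ to $\int_{E_\delta}(x^2-y^2)\,dS\ge0$, which is fact (a); so it holds for all $\delta$.

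For the last two bounds, set $R(\delta)=b_y/b_x$ and $G(\delta)=(1+2\delta)R(\delta)-1+2\delta$. At $\delta=0$ the cap $E_0=\{z^2<\tfrac13\}$ is axisymmetric, so $R(0)=1$ and $G(0)=0$; differentiating $E_\delta$ in $\delta$ (a boundary term, by the Hadamard/Leibniz rule) and evaluating the resulting one‑dimensional integrals I would find $R'(0)=\frac{3(A_y'(0)-A_x'(0))}{3A_x(0)-A(0)}=-2$, hence $G'(0)=2$; together with the sign of the next‑order term this gives $G(\delta)\ge2\delta$ for $\delta$ small (the third inequality). For the fourth, the same radial integration as in (b) rewrites $G(\delta)>0$ as $\delta\int_{-1}^1 G_1(\tfrac32+\delta u)\sqrt{1-u^2}\,du>2\int_{-1}^1\frac{u\,G_1(\tfrac32+\delta u)}{\sqrt{1-u^2}}\,du$ with $G_1(D)=\sqrt{D-1}/D^{3/2}$; since $G_1'(\tfrac32)=0$, $G_1$ is unimodal with peak at $D=\tfrac32$, and $G_1(\tfrac32+\delta u)>G_1(\tfrac32-\delta u)$ for $u>0$, the right‑hand side is of strictly higher order in $\delta$ than the left, and a quantitative comparison of the two integrals gives the inequality on every compact subinterval of $(0,1/2)$; continuity of $G$ and compactness then yield $c(\beta)>0$.

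The main obstacle is exactly this last step — showing $G(\delta)>0$ on the whole interval, not just near $\delta=0$. In $(1+2\delta)(3A_y-A)-(1-2\delta)(3A_x-A)$ the two competing cap integrals have opposite signs and both degenerate at $\delta=0$ (because $G_1'(\tfrac32)=0$ and $3A_x-A$, $3A_y-A$ coincide there), so only an honest quantitative comparison, made tractable by the closed‑form radial reduction, will close it; a soft compactness argument alone will not. The identity $R'(0)=-2$, and the attendant check on the sign of $G''(0)$, is a secondary difficulty, since $2$ is precisely the borderline slope occurring in the third inequality.
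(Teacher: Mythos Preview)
The paper does not give its own proof of this lemma; it is quoted from \cite{ASW2} (Lemmas~4.5--4.6 there) as a preliminary result. So there is no in-paper argument to compare against, and I can only assess your plan on its own merits.

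Your reductions are sound. The relations $3A_{x_i}-A=-b_{x_i}$, $b_x+b_y+b_z=0$, the symmetrisation yielding $b_x\ge b_y$, and the radial computation $b_z=-2\int_0^{2\pi}\sqrt{s_0}\,(1-s_0)\,d\theta<0$ are all correct; together they give $b_x>0$ and the first two inequalities exactly as you describe. Your monotonicity claim for $G_1$ is also right: a direct expansion gives $(1/2+t)(3/2-t)^3-(1/2-t)(3/2+t)^3=8t^3$, so $G_1(3/2+\delta u)>G_1(3/2-\delta u)$ for $u\in(0,1)$. The value $R'(0)=-2$ checks out as well: on the circles $\{z^2=1/3\}$ one has $|\nabla_S p_0|=\sqrt{2}$, the boundary integrals give $A_y'(0)-A_x'(0)=\tfrac{8\pi}{9\sqrt3}$ and $3A_x(0)-A(0)=-\tfrac{4\pi}{3\sqrt3}$, whence $R'(0)=-2$ and $G'(0)=2$.

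The genuine gaps are the two you flag yourself. For the third inequality, $G'(0)=2$ is exactly the borderline slope, so the sign of $G''(0)$ (or of the first nonvanishing higher derivative) is not optional; it decides whether $G(\delta)\ge 2\delta$ holds or fails near $0$, and you have not computed it. For the fourth inequality, your integral identity is correct and the asymptotics $\text{LHS}\sim\delta$, $\text{RHS}\sim\delta^3$ confirm $G>0$ near $0$; but the uniform lower bound on $[\beta,1/2-\beta]$ is, by compactness, equivalent to $G>0$ on all of $(0,1/2)$, and the sketch does not establish this away from $\delta=0$. The positivity $G_1(3/2+t)>G_1(3/2-t)$ only tells you that the right-hand integral is positive, not that it is dominated by $\delta$ times the left-hand one. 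Until those two quantitative steps are carried out, the plan is a correct outline rather than a proof.
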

The following useful lemma -- found in \cite{MR1814358} -- is of
independent interest and gives an estimate of the size of sublevel
sets of $|p|$ where $p$ is a second order polyomial.
\begin{lem}
[\cite{MR1814358}, Corollary 4.1]\label{lem:polynomial-set-size}Let
$p$ be a second order polynomial in $\mathbb{R}^{n}$ With $\|p\|_{L^{\infty}(Q_{1})}\le1$,
$Q_{1}$ being the unit cube. Then
\[
|\{|p|\le\epsilon\}\cap Q_{1}|\le C(n,\alpha)\epsilon^{\alpha},\qquad\alpha\in(0,1/4).
\]

\end{lem}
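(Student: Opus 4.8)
The plan is to reduce to a one-dimensional estimate along suitably chosen lines and then integrate. Write $p(\mathbf{x}) = \mathbf{x}^T A \mathbf{x} + \mathbf{b}\cdot\mathbf{x} + c$ with $A$ symmetric. After an orthogonal change of coordinates we may assume $A = \operatorname{diag}(\lambda_1,\dots,\lambda_n)$; note that rotating the cube only changes it into a rotated cube still contained in a fixed ball $B_{C(n)}$, and replacing $Q_1$ by a slightly larger cube only affects the constant $C(n,\alpha)$, so this is harmless. The key observation is that along any axis-parallel line, $t \mapsto p(\dots,t,\dots)$ is a polynomial of degree at most two in $t$, and the leading coefficient is one of the $\lambda_i$. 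The heart of the matter is therefore the elementary claim: if $q(t) = at^2 + bt + c$ is a real quadratic with $\sup_{[-1,1]}|q| \ge \mu$, then $|\{t\in[-1,1] : |q(t)|\le\epsilon\}| \le C (\epsilon/\mu)^{1/2}$ — this follows from the remez-type inequality, since a quadratic cannot stay small on a large set without being small everywhere (its $L^\infty$ norm on an interval $I$ is controlled by $\sup_{I'}|q|$ for any subinterval $I'\subset I$, with a constant depending only on $|I'|/|I|$).

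The main obstacle is that the leading coefficients $\lambda_i$ can all be tiny (indeed zero), in which case the quadratic-in-one-variable argument degenerates and we only get a linear bound $|\{|q|\le\epsilon\}| \le C\epsilon/\|\nabla'\text{-part}\|$; we must then chase where the $L^\infty$-mass of $p$ sits. I would split into cases according to the size of $\|A\|$. If $\|A\| = \max_i|\lambda_i| \ge \rho$ (for a threshold $\rho$ to be optimized), pick the index $i$ with $|\lambda_i| \ge \rho$; fiber the cube by lines parallel to $\mathbf{e}_i$; on each fiber apply the quadratic one-variable estimate with leading coefficient $\lambda_i$, getting fiberwise measure $\le C(\epsilon/\rho)^{1/2}$; integrate over the $(n-1)$-dimensional base to get $|\{|p|\le\epsilon\}\cap Q_1| \le C(n)(\epsilon/\rho)^{1/2}$. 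If instead $\|A\| < \rho$, then $p$ is within $C(n)\rho$ of the affine function $\ell(\mathbf{x}) = \mathbf{b}\cdot\mathbf{x}+c$ on $Q_1$ (and on the slightly enlarged cube), so $\|\ell\|_{L^\infty} \ge 1 - C(n)\rho \ge 1/2$ once $\rho$ is small; hence $|\mathbf{b}|$ is bounded below by a dimensional constant, and fibering by lines parallel to $\mathbf{b}/|\mathbf{b}|$ and using the linear one-variable estimate $|\{|\ell|\le\epsilon'\}| \le 2\epsilon'/|\mathbf{b}|$ with $\epsilon' = \epsilon + C(n)\rho$ gives $|\{|p|\le\epsilon\}\cap Q_1| \le C(n)(\epsilon + \rho)$.

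Finally I would balance the two cases: the first bound is $\sim (\epsilon/\rho)^{1/2}$, the second is $\sim \epsilon + \rho$. For $\epsilon$ small, choosing $\rho = \epsilon^{\gamma}$ with a suitable $\gamma \in (0,1)$ makes both sides bounded by $C(n)\epsilon^{\alpha}$ for any prescribed $\alpha < 1/2$; since the statement only asks for $\alpha \in (0,1/4)$ there is ample room, and for $\epsilon$ bounded away from $0$ the estimate is trivial since $|Q_1| = 1$. One technical point to handle carefully is uniformity of the one-variable constants: the quadratic estimate must be applied on fibers that are intervals of length comparable to $1$ (true for an axis-parallel cube) and with the lower bound on $\sup|q|$ coming not pointwise-per-fiber but from the global $\|p\|_{L^\infty(Q_1)}$ together with the fact that $\|\nabla p\|$ is bounded on $Q_1$ — so I would instead argue that for \emph{most} fibers the one-variable sup is $\gtrsim 1$, handling the exceptional fibers (total base-measure small) separately. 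This bookkeeping is routine but is where the constant $C(n,\alpha)$ genuinely enters.
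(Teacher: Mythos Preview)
The paper does not prove this lemma itself; it is quoted without proof from \cite{MR1814358}, so there is no in-paper argument to compare against. Your elementary slicing approach is essentially correct and yields a self-contained proof, with one arithmetic slip: balancing the Case~1 bound $(\epsilon/\rho)^{1/2}$ against the Case~2 bound $\epsilon+\rho$ gives at best $\epsilon^{1/3}$ (set $\rho=\epsilon^{1/3}$; in general one needs $\alpha\le\gamma\le 1-2\alpha$, forcing $\alpha\le 1/3$), not $\epsilon^\alpha$ for every $\alpha<1/2$ as you write. Since the lemma only requires $\alpha\in(0,1/4)$, this still suffices. Two smaller points worth tidying: in Case~2 you should also dispose of the sub-case where $|\mathbf{b}|$ is small, in which $|c|\ge 1/2$ and the sublevel set is empty for small $\epsilon$; and your final paragraph's worry about per-fiber uniformity is unnecessary in Case~1, since the leading coefficient $\lambda_i$ of the one-variable restriction is the same on every axis-parallel fiber and the direct bound $|\{t:|q(t)|\le\epsilon\}|\le 4(\epsilon/|\lambda_i|)^{1/2}$ holds uniformly without any appeal to the fiberwise sup norm. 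You have also tacitly---and correctly for the applications in the paper---read the normalization as $\|p\|_{L^\infty(Q_1)}=1$ rather than $\le 1$.
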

From the estimates given below we have uniform bounds when considering
blow-ups. Heuristically they state that if $u_{r}$ behaves badly
at some point, then that behaviour is inherited by $\Pi(u_{r},1$).
\begin{lem}
[\cite{ASW1}, Lemma 5.1]\label{lem:projection-C1alpha} Let $u$
solve $|\Delta u|\le C$ in $B_{1}$ such that $\|u\|_{L^{\infty}(B_{1})}\le M$.
If $\mathbf{x}^{0}\in B_{1/2}$, $0<r\le1/4$, and $u(\mathbf{x}^{0})=|\nabla u(\mathbf{x}^{0})|=0$,
then
\begin{align}
\|u_{r,\mathbf{x}^{0}}-\Pi(u_{r},1)\|_{C^{1,\alpha}(\overline{B}_{1})} & \le C(n,M,\alpha),\label{eq:ur-Piur-C1alpha}\\
\|u_{r,\mathbf{x}^{0}}-\Pi(u_{r},1)\|_{W^{2,p}(B_{1})} & \le C(n,M,p).\label{eq:ur-Piur-W2p}
\end{align}

\end{lem}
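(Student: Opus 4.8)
The plan is to reduce the statement to one scale-invariant bound on $u$ by standard interior elliptic estimates, and then to establish that bound using that $\Delta u\in L^{\infty}$ puts $D^{2}u$ in $BMO$, so that the quadratic part of $u$ can grow only logarithmically as one zooms in. Throughout write $(g)_{E}\df|E|^{-1}\int_{E}g$ for averages.

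First I would set up the reduction. Since $|\mathbf{x}^{0}|<1/2$ and $r\le1/4$, the rescaled function $u_{r,\mathbf{x}^{0}}$ is defined on a fixed ball $B_{R}$ with $R>1$ and satisfies $|\Delta u_{r,\mathbf{x}^{0}}|\le C$ there. The $L^{2}$-orthogonal projection onto the constant Hessians of elements of $\mathbb{HP}_{2}$ is the mean, so the polynomial $P\df\Pi(u_{r},1)\in\mathbb{HP}_{2}$ appearing in the statement has $D^{2}P=(D^{2}u)_{B_{r}(\mathbf{x}^{0})}$; in particular $|\Delta P|=|\tr D^{2}P|\le C$, whence $w\df u_{r,\mathbf{x}^{0}}-P$ obeys $|\Delta w|\le2C$ on $B_{R}$, $w(0)=|\nabla w(0)|=0$, and (by linearity of $\Pi$ and Lemma~\ref{lem:Projection-properties}) $\Pi(w,1)=0$. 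By the interior Calder\'on--Zygmund estimate and the interior $C^{1,\alpha}$-estimate for the Poisson equation with bounded right-hand side,
\[
\|w\|_{C^{1,\alpha}(\overline{B}_{1})}+\|w\|_{W^{2,p}(B_{1})}\le C(n,\alpha,p)\big(\|w\|_{L^{\infty}(B_{R})}+\|\Delta w\|_{L^{\infty}(B_{R})}\big),
\]
and since $\|\Delta w\|_{L^{\infty}(B_{R})}\le2C$ it remains to bound $\|w\|_{L^{\infty}(B_{R})}$; equivalently, it suffices to show that for $\rho$ a suitable fixed multiple of $r$,
\[
\sup_{B_{\rho}(\mathbf{x}^{0})}\big|u-Q\big|\le C(n,M)\,\rho^{2},\qquad Q(\mathbf{x})\df\tfrac{1}{2}\,(D^{2}u)_{B_{\rho}(\mathbf{x}^{0})}(\mathbf{x}-\mathbf{x}^{0})\cdot(\mathbf{x}-\mathbf{x}^{0});
\]
changing $\rho$ by a fixed factor changes $(D^{2}u)_{B_{\rho}(\mathbf{x}^{0})}$, hence $Q$ on $B_{\rho}(\mathbf{x}^{0})$, only by $O(\rho^{2})$ in view of the $BMO$ bound below.

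Next I would prove this core estimate. Since $\Delta u\in L^{\infty}(B_{1})$ and $\|u\|_{L^{\infty}(B_{1})}\le M$, a standard endpoint ($L^{\infty}\to BMO$) estimate gives $D^{2}u\in BMO(B_{7/8})$ with norm $\le C(n)(C+M)=:C_{0}$; hence the averages $A_{\sigma}\df(D^{2}u)_{B_{\sigma}(\mathbf{x}^{0})}$ satisfy $|A_{\sigma}-A_{2\sigma}|\le C(n)C_{0}$, so $|A_{\sigma}-A_{\rho}|\le C(n)C_{0}(1+\log(\rho/\sigma))$ for $\sigma\le\rho$, while $(|D^{2}u-A_{\sigma}|)_{B_{\sigma}(\mathbf{x}^{0})}\le C_{0}$. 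Put $v\df u-Q$, so $v(\mathbf{x}^{0})=|\nabla v(\mathbf{x}^{0})|=0$, $|\Delta v|\le2C$, and $D^{2}v=D^{2}u-A_{\rho}$. Writing $\nabla v(\mathbf{x})=\int_{0}^{1}D^{2}v(\mathbf{x}^{0}+t(\mathbf{x}-\mathbf{x}^{0}))(\mathbf{x}-\mathbf{x}^{0})\,dt$, averaging over $B_{s}(\mathbf{x}^{0})$ and changing variables $\mathbf{z}=\mathbf{x}^{0}+t(\mathbf{x}-\mathbf{x}^{0})$ gives $|(\nabla v)_{B_{s}(\mathbf{x}^{0})}|\le s\int_{0}^{1}(|D^{2}v|)_{B_{ts}(\mathbf{x}^{0})}\,dt\le CC_{0}\,s(1+\log(\rho/s))$, the $\log(1/t)$ in the integrand being integrable; with the Poincar\'e inequality $(|\nabla v-(\nabla v)_{B_{s}}|^{2})_{B_{s}(\mathbf{x}^{0})}^{1/2}\le Cs\,(|D^{2}v|^{2})_{B_{s}(\mathbf{x}^{0})}^{1/2}\le CC_{0}s$ this yields $(|\nabla v|^{2})_{B_{s}(\mathbf{x}^{0})}^{1/2}\le CC_{0}\,s(1+\log(\rho/s))$ for $s\le\rho$. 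Integrating along rays once more, now using $v(\mathbf{x}^{0})=0$, plus one further Poincar\'e inequality, gives $(|v|^{2})_{B_{\rho}(\mathbf{x}^{0})}^{1/2}\le CC_{0}\,\rho^{2}$, and since $|\Delta v|\le2C$ the sub-mean-value inequality $\sup_{B_{\rho/2}(\mathbf{x}^{0})}|v|\le C(n)\big((|v|^{2})_{B_{\rho}(\mathbf{x}^{0})}^{1/2}+\rho^{2}\|\Delta v\|_{L^{\infty}}\big)$ yields $\sup_{B_{\rho/2}(\mathbf{x}^{0})}|v|\le C(n,M)\rho^{2}$, which is the core estimate (up to the benign factor $2$ in the radius). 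Feeding $\|w\|_{L^{\infty}(B_{R})}\le C(n,M)$ back into the reduction completes the proof.

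The hard part is not any single step but the fact that the estimate cannot be obtained by pure compactness: the hypothesis $\|u\|_{L^{\infty}(B_{1})}\le M$ is not stable under the blow-up rescaling, and indeed $\|u_{r,\mathbf{x}^{0}}\|_{L^{\infty}(B_{1})}$ diverges as $r\to0^{+}$ precisely when $u\notin C^{1,1}$, while a limit of $w$ obtained by normalizing would only be a harmonic function vanishing to second order at $0$ with $\Pi(\cdot,1)=0$ --- which need not be zero. The mechanism that saves the day is that this divergence is at worst logarithmic and is carried \emph{entirely} by the quadratic part: it is exactly the logarithmically growing mean $A_{\rho}$ of the Hessian that is subtracted off in forming $w$ from $u_{r,\mathbf{x}^{0}}$, and the $BMO$ control of $D^{2}u$ --- combined with the integrability of the $\log$-factors it produces along rays --- is what turns this cancellation into the quantitative bound $\|w\|_{L^{\infty}(B_{R})}\le C(n,M)$. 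The only genuinely fiddly part is the bookkeeping of the various radii and the (bounded) cost of replacing one dyadic Hessian-average by another.
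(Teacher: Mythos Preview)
Your argument is correct. The paper does not actually supply a proof here: it simply says that the proof of \cite[Lemma~5.1]{ASW1} goes through verbatim with $|\Delta u|\le C$ in place of $\Delta u=-\chi_{\{u>0\}}$, so there is no explicit argument in the present paper to compare against. Your route --- identifying $D^{2}\Pi(u_{r},1)$ with the mean Hessian $(D^{2}u)_{B_{r}(\mathbf{x}^{0})}$, invoking the endpoint estimate $\Delta u\in L^{\infty}\Rightarrow D^{2}u\in BMO$, and then integrating twice along rays so that the logarithmic growth of the Hessian averages becomes a bounded contribution at second order --- is a self-contained and natural way to establish the estimate, and your closing paragraph correctly isolates why a naive compactness argument is unavailable.

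One minor slip: in the Poincar\'e step you write $(|D^{2}v|^{2})_{B_{s}(\mathbf{x}^{0})}^{1/2}\le CC_{0}$, but since $D^{2}v=D^{2}u-A_{\rho}$ this bound should carry an additional $1+\log(\rho/s)$ factor (John--Nirenberg controls the $L^{2}$ oscillation around $A_{s}$, and $|A_{s}-A_{\rho}|\lesssim C_{0}(1+\log(\rho/s))$). This does not affect the outcome, because the mean of $\nabla v$ already has that logarithmic factor and dominates, so your conclusion $(|\nabla v|^{2})_{B_{s}(\mathbf{x}^{0})}^{1/2}\le CC_{0}\,s(1+\log(\rho/s))$ still holds and the rest of the argument goes through unchanged.
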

The proof in \cite[ Lemma 5.1]{ASW1} can be applied and goes through
word by word, except that $|\Delta u|\le C$ replaces $\Delta u=-\chi_{\{u>0\}}$.

A final remark is that the problem

\[
\Delta u(x)=f(x)\chi_{\{u>0\}}+g(x)
\]
can be reduced to \eqref{eq:main} by replacing $u$ with $u+\psi$
where $\psi$ solves $\Delta\psi=-g$, and the analysis applies as
long as $\psi$ is $C^{1,1}$.

\section{Growth of Solutions\label{sec:Growth-of-Solutions}}

The first lemma in this section shows that the growth of $\Pi(u_{r},1,\mathbf{x}^{0})$
is logarithmic in $r$ around singular points $\mathbf{x}^{0}$. The
analysis is performed at the origin, but is identical for any singular
point $\mathbf{x}^{0}$.
\begin{lem}
\label{lem:Pi-growth-at-singular-points-nonconstant}Let $u$ solve
\eqref{eq:main} in $B_{1}\subset\mathbb{R}^{3}$ for $\|u\|_{L^{\infty}(B_{1})}\le M$
such that $u(0)=|\nabla u(0)|=0$, $\sup_{0<r\le1/4}|\psi_{r}|\le C_{\psi}$,
and $f$ is continuous with modulus of continuity $\omega_{f}$ and
$f(0)=-a$, $a>0$. Then there exist constants $K_{0}$ and $r_{0}=r_{0}(K_{0},C_{\psi},f)$
such that if
\[
\|\Pi(u_{r},1)\|_{L^{\infty}(B_{1})}\ge K_{0},\qquad0<r\le r_{0}
\]
then
\begin{align}
\|\Pi(u_{r},2^{-j})\|_{L^{\infty}(B_{1})} & \ge\|\Pi(u_{r},1)\|_{L^{\infty}(B_{1})}+\frac{ja\eta_{0}}{2}\label{eq:Pi-growth-below}\\
\|\Pi(u_{r},2^{-j})\|_{L^{\infty}(B_{1})} & \le\|\Pi(u_{r},1)\|_{L^{\infty}(B_{1})}+ja\kappa_{0}\label{eq:Pi-growth-above}
\end{align}
and
\begin{align}
\|u\|_{L^{\infty}(B_{s})} & \ge\frac{1}{16}\left(\left(\frac{s}{r}\right)^{2}\|u\|_{L^{\infty}(B_{r})}+\eta_{0}as^{2}\ln(r/s)\right),\label{eq:u-growth}\\
\|u\|_{L^{\infty}(B_{s})} & \le16\left(\left(\frac{s}{r}\right)^{2}\|u\|_{L^{\infty}(B_{r})}+\kappa_{0}as^{2}\ln(r/s)\right),
\end{align}
$0<s\le r$. Here $\eta_{0}$ is the constant in Lemma \ref{lem:Coefficient-estimates}
and $\kappa_{0}=\kappa_{0}(M,\alpha)$.\end{lem}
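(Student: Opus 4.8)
The plan is to iterate a single-step comparison estimate across the dyadic scales $r, r/2, r/4, \dots$, and the key is to control the difference between $\Pi(u_s,1)$ and $\Pi(u_{2s},1)$. The engine behind this is the auxiliary problem \eqref{eq:blow-up-problem}: at a scale where $\|\Pi(u_s,1)\|_{L^\infty}$ is large, the rescaled solution $u_s$ should look, to leading order, like $\Pi(u_s,1) + Z_{p}$ where $p = \Pi(u_s,1)$ and $Z_p$ solves $\Delta Z_p = -\chi_{\{p>0\}}$ — here the assumption $f(0)=-a<0$ enters, so locally $\Delta u \approx -a\,\chi_{\{u>\psi\}}$ and, after rescaling by $a$, the relevant model equation is exactly the one for $Z_{p}$. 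First I would make this precise: use Lemma \ref{lem:projection-C1alpha} to get compactness of $u_s - \Pi(u_s,1)$ in $C^{1,\alpha}$, pass to a blow-up limit, and identify the limit as $a\,Z_{p}$ for the normalized polynomial $p = \Pi(u_s,1)/\|\Pi(u_s,1)\|_{L^\infty}$; the Dini continuity of $f$ and the bound $|\psi_s|\le C_\psi$ (together with $\psi \in C^{1,\alpha}$, so that $\psi_s$ has a subsequential limit which is itself a second-order polynomial) are what make the limit equation clean. The constant $K_0$ is chosen large enough that on the relevant scales the sign of $u_s-\psi_s$ is governed by the sign of $p$ up to a negligible set, which is where Lemma \ref{lem:polynomial-set-size} is used to bound the measure of the transition region $\{|p|\le\epsilon\}$.

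Next, applying $\Pi(\cdot,1/2)$ to the approximate identity $u_s \approx \|\Pi(u_s,1)\|_{L^\infty}\big(p + \tfrac{a}{\|\Pi(u_s,1)\|_{L^\infty}} Z_p\big)$ and using that $\Pi(u_s,1/2) = \Pi(u_{2s},1)$ together with $\Pi(p,1/2)=p$ (Lemma \ref{lem:Projection-properties}(ii), since $p$ is harmonic) gives
\begin{equation}
\Pi(u_{2s},1) \approx \|\Pi(u_s,1)\|_{L^\infty}\, p + a\,\Pi(Z_{p},1/2),
\end{equation}
up to an error that is $o(1)$ in $C^{1,\alpha}$ as the scale shrinks (controlled by $\omega_f$ and by the $C^{1,\alpha}$ smallness of $\psi_s$'s deviation from its polynomial limit). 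Now Lemma \ref{lem:Coefficient-estimates} takes over: the first inequality there says $\sup_{B_1}|Cp_\delta + \Pi(Z_{p_\delta},1/2)| \ge C + \eta_0$ for $C$ large, which (after rescaling $C = \|\Pi(u_s,1)\|_{L^\infty}/a$ and writing $p$ in diagonal form $p_\delta$) yields the one-step lower bound $\|\Pi(u_{2s},1)\|_{L^\infty} \ge \|\Pi(u_s,1)\|_{L^\infty} + a\eta_0/2$; the upper bound $\|\Pi(u_{2s},1)\|_{L^\infty} \le \|\Pi(u_s,1)\|_{L^\infty} + a\kappa_0$ comes from the crude estimate $\|\Pi(Z_p,1/2)\|_{L^\infty} \le \kappa_0(M,\alpha)$, which follows from Lemma \ref{lem:projection-C1alpha} and interior elliptic estimates for $Z_p$. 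Iterating $j$ times from scale $r$ down to $2^{-j}r$ gives \eqref{eq:Pi-growth-below} and \eqref{eq:Pi-growth-above} — note one must check that $\|\Pi(u_s,1)\|_{L^\infty} \ge K_0$ is maintained along the whole chain, which is automatic once the lower bound holds since the sequence is increasing.

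For the last two inequalities, the point is to transfer growth of $\|\Pi(u_r,1)\|_{L^\infty}$ to growth of $\|u\|_{L^\infty(B_s)}$. This uses Lemma \ref{lem:projection-C1alpha} in the form $\big|\,\|u_r\|_{L^\infty(B_1)} - \|\Pi(u_r,1)\|_{L^\infty(B_1)}\,\big| \le C(n,M,\alpha)$, so that $\|u\|_{L^\infty(B_s)} = s^2\|u_s\|_{L^\infty(B_1)}$ is comparable to $s^2\|\Pi(u_s,1)\|_{L^\infty(B_1)}$ up to an additive $O(s^2)$ and a bounded multiplicative factor; then \eqref{eq:Pi-growth-below}--\eqref{eq:Pi-growth-above} applied with $s = 2^{-j}r$ and $\ln(r/s) = j\ln 2$ give the stated bounds (the $\tfrac{1}{16}$ and $16$ absorb the passage between dyadic and continuous scales and the comparison constants). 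The main obstacle, and the part requiring genuine care, is the blow-up/compactness argument identifying the limit equation: one must handle simultaneously the non-constant, merely Dini-continuous $f$ (so the right-hand side only converges to the constant $-a$ in an averaged sense, not pointwise — this is exactly why $f$ Dini rather than just continuous matters for summability of errors), the possibly non-constant obstacle $\psi$ (whose rescalings $\psi_r$ are bounded by hypothesis but whose limit must be shown to be an honest polynomial that can be absorbed), and the fact that the free boundary $\{u_s > \psi_s\}$ must be shown to converge to $\{p > 0\}$ in measure — the latter is delicate precisely because $\partial\{p>0\}$ can be large when $\delta$ is near $0$, and Lemma \ref{lem:polynomial-set-size} is the quantitative tool that saves it.
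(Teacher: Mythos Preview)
Your approach is essentially the paper's: a compactness/blow-up argument identifying the limit of $u_s-\Pi(u_s,1)$ with $aZ_p$, then Lemma~\ref{lem:Coefficient-estimates} for the one-step lower bound, iteration across dyadic scales, and finally Lemma~\ref{lem:projection-C1alpha} to transfer to $\|u\|_{L^\infty(B_s)}$. A few corrections are in order, however. First, $\Pi(u_s,1/2)=\Pi(u_{s/2},1)$, not $\Pi(u_{2s},1)$; your later remark ``from scale $r$ down to $2^{-j}r$'' shows you have the right direction, so this is a slip. Second, you overcomplicate the role of $\psi$: the lemma does \emph{not} assume $\psi\in C^{1,\alpha}$, and there is no need for $\psi_r$ to converge to a polynomial. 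The paper simply observes that $\{u_r>\psi_r\}=\{(u_r-\Pi(u_r,1))/\|\Pi(u_r,1)\|+p_k>\psi_r/\|\Pi(u_r,1)\|\}$, and since both $(u_r-\Pi(u_r,1))/\|\Pi(u_r,1)\|$ and $\psi_r/\|\Pi(u_r,1)\|$ are $O(1/\|\Pi(u_r,1)\|)\to 0$ by Lemma~\ref{lem:projection-C1alpha} and the hypothesis $|\psi_r|\le C_\psi$, the set converges to $\{p>0\}$ in measure via Lemma~\ref{lem:polynomial-set-size}. Third, Dini continuity of $f$ is \emph{not} used in this lemma---mere continuity suffices, since all that is needed is $\omega_f(r_k)\to 0$; the Dini condition enters only in the next lemma (Lemma~\ref{lem:ur-Pi-decay}) to make the error summable. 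Finally, for the upper bound the paper takes a more elementary route than yours: rather than passing through $Z_p$, it bounds $|\Pi(u_r,1/2)-\Pi(u_r,1)|$ directly by the mean value property for harmonic functions together with the $W^{2,2}$ estimate \eqref{eq:ur-Piur-W2p}, which gives the constant $\kappa_0$ without any compactness argument.
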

\begin{proof}
Assume for the moment that $f(0)=-1$. The proof goes via contradiction:
take a sequence $\{u^{k}\}$ of solutions to \eqref{eq:main} bounded
by $M$ such that $\|\Pi(u_{r_{k}}^{k},1)\|_{L^{\infty}(B_{1})}\ge k$
while 
\begin{equation}
\|\Pi(u_{r_{k}}^{k},1/2)\|_{L^{\infty}(B_{1})}\le\|\Pi(u_{r_{k}}^{k},1)\|_{L^{\infty}(B_{1})}+\eta_{0}/2.\label{eq:Pi-small-growth}
\end{equation}
By Lemma \ref{lem:projection-C1alpha} $\{u_{r_{k}}^{k}-\Pi(u_{r_{k}}^{k},1)\}$
converges in $C_{\text{loc}}^{1,\alpha}(\mathbb{R}^{n})$, up to a
subsequence, to a function $v$ as $r_{k}\to0^{+}$. Passing again
to a subsequence such that $p\df\lim_{k}\frac{\Pi(u_{r_{k}}^{k},1)}{\|\Pi(u_{r_{k}}^{k},1)\|_{L^{\infty}(B_{1})}}\in\mathbb{HP}_{2}$
and $p_{k}\df\frac{\Pi(u_{r_{k}}^{k},1)}{\|\Pi(u_{r_{k}}^{k},1)\|_{L^{\infty}(B_{1})}}$,
we see that $v$ solves
\begin{equation}
\Delta v=\lim_{k\to\infty}f(r_{k}\mathbf{x})\chi_{\{u_{r_{k}}^{k}>\psi_{r_{k}}\}}=-\chi_{\{p>0\}}.\label{eq:limit-equation}
\end{equation}
Indeed,
\[
u_{r_{k}}^{k}>\psi_{r_{k}}\Leftrightarrow\frac{u_{r_{k}}^{k}-\Pi(u_{r_{k}}^{k},1)}{\|\Pi(u_{r_{k}}^{k},1)\|_{L^{\infty}(B_{1})}}+\frac{\Pi(u_{r_{k}}^{k},1)}{\|\Pi(u_{r_{k}}^{k},1)\|_{L^{\infty}(B_{1})}}>\frac{\psi_{r_{k}}}{\|\Pi(u_{r_{k}}^{k},1)\|_{L^{\infty}(B_{1})}}
\]
and from the assumptions, 
\[
\max\left\{ \frac{|\psi_{r_{k}}|}{\|\Pi(u_{r_{k}}^{k},1)\|_{L^{\infty}(B_{1})}},\frac{|u_{r_{k}}^{k}-\Pi(u_{r_{k}}^{k},1)|}{\|\Pi(u_{r_{k}}^{k},1)\|_{L^{\infty}(B_{1})}}\right\} \le\frac{C}{k}\to0,\qquad k\to\infty,
\]
from which $u_{r_{k}}^{k}>\psi_{r_{k}}$ is equivalent to $p>h^{k}$
for some function $h^{k}$ tending to zero as $k$ increases. Therefore,
for any $\epsilon>0$ and $\phi\in C_{c}^{1}(B_{R})$, 
\begin{align*}
 & \lim_{k\to\infty}\Big|\int_{B_{R}}\phi(f(r_{k}x)\chi_{\{h^{k}>0\}}+\chi_{\{p>0\}})\,dx)\Big|\\
 & \le\|\phi\|_{L^{\infty}(B_{R})}\lim_{k\to\infty}\int_{B_{R}}|f(r_{k}x)\chi_{\{u_{r_{k}}^{k}>\psi_{r_{k}}\}}-f(0)\chi_{\{u_{r_{k}}^{k}>\psi_{r_{k}}\}}|\,dx\\
 & \qquad+\|\phi\|_{L^{\infty}(B_{R})}\lim_{k\to\infty}\int_{B_{R}}|\chi_{\{p>0\}}-\chi_{\{p>h^{k}\}}|\,dx\\
 & \le\|\phi\|_{L^{\infty}(B_{R})}(|B_{R}|\lim_{k\to\infty}\omega_{f}(r_{k})+|\{|p|\le\epsilon\}\cap B_{R}|)\\
 & \le C(R)\|\phi\|_{L^{\infty}(B_{R})}\epsilon^{\alpha},
\end{align*}
by Lemma \ref{lem:polynomial-set-size}. Hence \eqref{eq:limit-equation}
is proven and $v=Z_{p}$ by uniqueness for $Z_{p}$ given in §\ref{sec:Setup-and-Notation}.
Consequently
\[
\lim_{j\to\infty}[\Pi(u_{r_{k}}^{k},1/2)-\Pi(u_{r_{k}}^{k},1)]=\Pi(Z_{p},1/2).
\]
However, then by Lemma \eqref{lem:Coefficient-estimates}, 
\begin{align*}
\|\Pi(u_{r_{k}}^{k},1/2)\|_{L^{\infty}(B_{1})} & \ge\|\Pi(u_{r_{k}}^{k},1)+\Pi(Z_{p_{k}},1/2)\|_{L^{\infty}(B_{1})}+o(1)\\
 & \ge\|\Pi(u_{r_{k}}^{k},1)\|_{L^{\infty}(B_{1})}+\eta_{0}+o(1)
\end{align*}
which is a contradiction to \eqref{eq:Pi-small-growth}, so \eqref{eq:Pi-growth-below}
holds for $j=1$ and $a=1$. This can be iterated, recalling that
$\Pi(u_{r},s)=\Pi(u_{sr},1)$,

\begin{align*}
 & \|\Pi(u_{r},2^{-j})\|_{L^{\infty}(B_{1})}\\
 & =\|\Pi(u_{2^{-j+1}r},1/2)\|_{L^{\infty}(B_{1})}\ge\|\Pi(u_{2^{-j+1}r},1)\|_{L^{\infty}(B_{1})}+\eta_{0}/2\\
 & =\|\Pi(u_{2^{-j+2}r},1/2)\|_{L^{\infty}(B_{1})}+\eta_{0}/2\ge\|\Pi(u_{2^{-j+2}r},1)\|_{L^{\infty}(B_{1})}+2\eta_{0}/2\\
 & \ge\ldots\ge\|\Pi(u_{r},1)\|_{L^{\infty}(B_{1})}+j\eta_{0}/2,
\end{align*}
which is the first inequality in \eqref{eq:Pi-growth-below}.

The bound from above follows from the mean value property of harmonic
functions,
\begin{align*}
 & |\Pi(u_{r},1/2)-\Pi(u_{r},1)|^{2}\\
 & =\Big|\fint_{B_{r/2}}\Pi(u_{r},1/2)-\Pi(u_{r},1)\,d\mathbf{x}\Big|^{2}\\
 & \le2\fint_{B_{r/2}}|\Pi(u_{r},1/2)-D^{2}u_{r}|^{2}\,d\mathbf{x}+2\fint_{B_{r/2}}|D^{2}u_{r}-\Pi(u_{r},1)|^{2}\,d\mathbf{x}\\
 & \le2C+2^{n+1}\fint_{B_{r}}|D^{2}u-\Pi(u_{r},1)|^{2}\,d\mathbf{x}\le2^{n+2}C
\end{align*}
for $C$ as in \eqref{eq:ur-Piur-W2p}. Therefore
\[
\|\Pi(u_{r},1/2)\|_{L^{\infty}(B_{1})}\le\|\Pi(u_{r},1)\|_{L^{\infty}(B_{1})}+\kappa_{0}
\]
for $\kappa_{0}=2^{n+2}C$, which again can be iterated,
\begin{align*}
 & \|\Pi(u_{r},2^{-j})\|_{L^{\infty}(B_{1})}\\
 & =\|\Pi(u_{2^{-j+1}r},1/2)\|_{L^{\infty}(B_{1})}\le\|\Pi(u_{2^{-j+1}r},1)\|_{L^{\infty}(B_{1})}+\kappa_{0}\\
 & =\|\Pi(u_{2^{-j+2}r},1/2)\|_{L^{\infty}(B_{1})}+\kappa_{0}\le\|\Pi(u_{2^{-j+2}r},1)\|_{L^{\infty}(B_{1})}+2\kappa_{0}\\
 & \le\ldots\le\|\Pi(u_{r},1)\|_{L^{\infty}(B_{1})}+j\kappa_{0}.
\end{align*}

By Lemma \ref{lem:projection-C1alpha} with $K_{0}>2C$ as in \eqref{eq:ur-Piur-C1alpha},
\begin{equation}
\frac{1}{2}\|\Pi(u_{r},1)\|_{L^{\infty}(B_{1})}\le\|u_{r}\|_{L^{\infty}(B_{1})}\le2\|\Pi(u_{r},1)\|_{L^{\infty}(B_{1})}.\label{eq:u-and-Pi-comparable}
\end{equation}
Let $2^{-j-1}r\le s\le2^{-j}r$, then
\begin{align*}
\frac{\|u\|_{L^{\infty}(B_{s})}}{s^{2}} & \ge\frac{\|u\|_{L^{\infty}(B_{2^{-j-1}r})}}{(2^{-j}r)^{2}}=\frac{1}{4}\|u_{2^{-j-1}r}\|_{L^{\infty}(B_{1})}\\
 & \ge\{\eqref{eq:u-and-Pi-comparable}\}\ge\frac{1}{8}\|\Pi(u_{r},2^{-j-1})\|_{L^{\infty}(B_{1})}\\
 & \ge\{\eqref{eq:Pi-growth-below}\}\ge\frac{1}{8}(\|\Pi(u_{r},1)\|_{L^{\infty}(B_{1})}+(j+1)\eta_{0}/2)\\
 & \ge\{\eqref{eq:u-and-Pi-comparable}\}\ge\frac{1}{16}\|u_{r}\|_{L^{\infty}(B_{1})}+\frac{1}{16}(j+1)\eta_{0}\\
 & \ge\frac{1}{16}(\|u_{r}\|_{L^{\infty}(B_{1})}+j\eta_{0}).
\end{align*}
and
\begin{align*}
\frac{\|u\|_{L^{\infty}(B_{s})}}{s^{2}} & \le4\frac{\|u\|_{L^{\infty}(B_{2^{-j}r})}}{(2^{-j}r)^{2}}=4\|u_{2^{-j}r}\|_{L^{\infty}(B_{1})}\\
 & \le8\|\Pi(u_{r},2^{-j})\|_{L^{\infty}(B_{1})}\le8(\|\Pi(u_{r},1)\|_{L^{\infty}(B_{1})}+j\kappa_{0})\\
 & \le16\|u_{r}\|_{L^{\infty}(B_{1})}+8j\kappa_{0}\\
 & \le16(\|u_{r}\|_{L^{\infty}(B_{1})}+j\kappa_{0}).
\end{align*}

where $\ln(\frac{r}{s})-1\le\ln(2)^{-1}\ln(\frac{r}{s})-1\le j\le\ln(2)^{-1}\ln(\frac{r}{s})\le2\ln(\frac{r}{s})$
has been used. Finally, apply the proof to $u/a$ if $a\ne1$ to get
\eqref{eq:Pi-growth-below} and \eqref{eq:Pi-growth-above}.
\end{proof}
The following lemma improves the estimate in Lemma \ref{lem:projection-C1alpha},
and says, combined with the previous lemma, that $|u_{r}-\Pi(u_{r},1)|$
decays like $(|\ln r|)^{-\alpha}$ at singular points.
\begin{lem}
\label{lem:ur-Pi-decay}Let $u$ solve \eqref{eq:main} in $B_{1}\subset\mathbb{R}^{n}$
for $\|u\|_{L^{\infty}(B_{1})}\le M$ such that $u(0)=|\nabla u(0)|=0$,
$\sup_{0<r\le1/4}|\psi_{r}|\le C_{\psi}$, and $f$ is Dini continuous
with modulus of continuity $\omega_{f}$ and $f(0)=-a$, $a>0$. Then
if $g^{r}$ solves
\begin{align*}
\Delta g^{r} & =f(0)\chi_{\{\Pi(u_{r},1)>0\}}-f(r\mathbf{x})\chi_{\{u_{r}>\psi_{r}\}}\qquad\text{in }B_{1},\\
g^{r} & =0\qquad\text{on }\partial B_{1},
\end{align*}
for $0<r<c(\omega_{f})$,
\begin{align}
\|D^{2}g^{r}\|_{L^{2}(B_{1})} & \le C(M,n,\alpha,f,\psi)(\|\Pi(u_{r},1)\|_{L^{\infty}(B_{1})})^{-\alpha},\label{eq:L2diffest-1}\\
\|\Pi(g^{r},s)\|_{L^{\infty}(B_{1})} & \le C(M,n,\alpha,f,\psi,c)(\|\Pi(u_{r},1)\|_{L^{\infty}(B_{1})})^{-\alpha},\label{eq:L2est->pointwiseest-1}
\end{align}
for $\text{\ensuremath{s\ge c>0}.}$\end{lem}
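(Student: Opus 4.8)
The plan is to estimate $\|D^2 g^r\|_{L^2(B_1)}$ by controlling the $L^2$-norm of the right-hand side $F^r := f(0)\chi_{\{\Pi(u_r,1)>0\}} - f(r\mathbf{x})\chi_{\{u_r>\psi_r\}}$ and then invoking Calderón--Zygmund estimates, since $g^r$ solves a Poisson equation with zero boundary data. Because $|F^r| \le |f(0)-f(r\mathbf{x})| + |f(r\mathbf{x})|\,|\chi_{\{\Pi(u_r,1)>0\}} - \chi_{\{u_r>\psi_r\}}|$, the first term is bounded by $\omega_f(r)$ pointwise, while the second is bounded by $\|f\|_\infty$ times the indicator of the symmetric difference of the two sets. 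So the heart of the matter is to bound the measure of $\{\mathbf{x} \in B_1 : \Pi(u_r,1)(\mathbf{x}) > 0\} \,\triangle\, \{\mathbf{x} \in B_1 : u_r(\mathbf{x}) > \psi_r(\mathbf{x})\}$.

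First I would observe that on this symmetric difference one has $|\Pi(u_r,1)| \le |\Pi(u_r,1) - u_r| + |u_r - \psi_r| + |\psi_r|$ — more precisely, if $\Pi(u_r,1) > 0$ but $u_r \le \psi_r$, then $\Pi(u_r,1) = \Pi(u_r,1) - u_r + (u_r - \psi_r) + \psi_r \le \|u_r - \Pi(u_r,1)\|_{L^\infty(B_1)} + 0 + \|\psi_r\|_{L^\infty(B_1)}$, and symmetrically in the other case. By Lemma \ref{lem:projection-C1alpha}, $\|u_r - \Pi(u_r,1)\|_{L^\infty(B_1)} \le C(n,M,\alpha) =: C_0$, and by hypothesis $\|\psi_r\|_{L^\infty(B_1)} \le C_\psi$. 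Hence the symmetric difference is contained in $\{|\Pi(u_r,1)| \le C_0 + C_\psi\} \cap B_1$. Now normalize: write $\Pi(u_r,1) = \|\Pi(u_r,1)\|_{L^\infty(B_1)} \cdot p_r$ where $\|p_r\|_{L^\infty(B_1)} = 1$; then the symmetric difference sits inside $\{|p_r| \le (C_0 + C_\psi)/\|\Pi(u_r,1)\|_{L^\infty(B_1)}\} \cap B_1$, and Lemma \ref{lem:polynomial-set-size} (applied on a cube containing $B_1$, after covering $B_1$ by finitely many unit cubes and rescaling, which only changes constants) gives that this set has measure at most $C(n,\alpha')\,\big((C_0+C_\psi)/\|\Pi(u_r,1)\|_{L^\infty(B_1)}\big)^{\alpha'}$ for any $\alpha' \in (0,1/4)$.

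Putting these together, $\|F^r\|_{L^2(B_1)}^2 \le C\big(\omega_f(r)^2 |B_1| + \|f\|_\infty^2 |\text{symmetric difference}|\big) \le C\big(\omega_f(r)^2 + \|\Pi(u_r,1)\|_{L^\infty(B_1)}^{-\alpha'}\big)$; since $r < c(\omega_f)$ can be chosen so that $\omega_f(r)$ is controlled, and since on the relevant range $\|\Pi(u_r,1)\|_{L^\infty(B_1)}$ is large (this is where $g^r$ is of interest, combined with Lemma \ref{lem:Pi-growth-at-singular-points-nonconstant}), both terms are dominated by a constant times $\|\Pi(u_r,1)\|_{L^\infty(B_1)}^{-\alpha}$ after possibly shrinking the exponent; relabel $\alpha$. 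Standard $W^{2,2}$ elliptic estimates for the Dirichlet problem then yield \eqref{eq:L2diffest-1}. For \eqref{eq:L2est->pointwiseest-1}, recall that $\Pi(g^r,s) = \Pi(g^r_s,1)$ is a linear projection of $D^2 g^r$ (averaged over $B_s$); for $s \ge c > 0$ the averaging is over a ball of fixed size, so $\|\Pi(g^r,s)\|_{L^\infty(B_1)} \le C(c)\,\|D^2 g^r\|_{L^2(B_1)}$ by Cauchy--Schwarz and the fact that $\Pi$ lands in the finite-dimensional space $\mathbb{HP}_2$ (where all norms are equivalent), giving \eqref{eq:L2est->pointwiseest-1} from \eqref{eq:L2diffest-1}.

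The main obstacle is the measure estimate on the symmetric difference of the two super-level sets: one must carefully track that the only place where $\chi_{\{\Pi(u_r,1)>0\}}$ and $\chi_{\{u_r>\psi_r\}}$ disagree is where $\Pi(u_r,1)$ is small (of size comparable to $\|u_r - \Pi(u_r,1)\|_\infty + \|\psi_r\|_\infty$, which are bounded but \emph{not} small), and then leverage the sublevel-set bound Lemma \ref{lem:polynomial-set-size} against the growth (largeness) of $\|\Pi(u_r,1)\|_{L^\infty(B_1)}$ to extract the decay rate $(\|\Pi(u_r,1)\|_{L^\infty(B_1)})^{-\alpha}$. A minor technical point is that Lemma \ref{lem:polynomial-set-size} is stated on the unit cube $Q_1$ with sup-norm normalization there, so a covering/rescaling argument is needed to transfer it to $B_1$, and the admissible exponent drops slightly (hence writing a general $\alpha \in (0,1/4)$ and absorbing constants).
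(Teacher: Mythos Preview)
Your proposal is correct and follows essentially the same route as the paper's own proof: bound the symmetric difference of $\{\Pi(u_r,1)>0\}$ and $\{u_r>\psi_r\}$ inside $\{|\Pi(u_r,1)|\le C\}$ via Lemma~\ref{lem:projection-C1alpha} and the bound on $\psi_r$, apply Lemma~\ref{lem:polynomial-set-size} after normalizing, absorb the $\omega_f(r)^2$ term using the logarithmic growth of $\|\Pi(u_r,1)\|_{L^\infty}$ from Lemma~\ref{lem:Pi-growth-at-singular-points-nonconstant}, invoke $W^{2,2}$ estimates, and finish with equivalence of norms on $\mathbb{HP}_2$. The only cosmetic differences are that you make the cube-to-ball transfer for Lemma~\ref{lem:polynomial-set-size} explicit, while the paper is slightly more explicit about why Dini continuity forces $\omega_f(r)^2\lesssim(\|\Pi(u_r,1)\|_{L^\infty(B_1)})^{-\alpha}$ (indeed, Dini implies $\omega_f(r)\ln(1/r)\to 0$, which suffices).
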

\begin{proof}
From Lemma \ref{lem:projection-C1alpha} and the boundedness of $\psi_{r}$,
\[
|u_{r}-\psi_{r}-\Pi(u_{r},1)|\le|u_{r}-\Pi(u_{r},1)|+|\psi_{r}|\le C(n,M,\psi,f)
\]
so $\Pi(u_{r},1)<-C$ implies that $u_{r}-\psi_{r}<0$, and $\Pi(u_{r},1)\ge C$
similarly gives that $u_{r}\ge\psi_{r}$. Therefore $|\Delta g^{r}|\le|f(r\mathbf{x})-f(0)|\le\omega_{f}(r)$
outside the set $\{|\Pi(u_{r},1)|\le C\}$ in $B_{1}$. Now Lemma
\ref{lem:polynomial-set-size} concludes that
\begin{align*}
\|\Delta g^{r}\|_{L^{2}(B_{1})}^{2} & \le|\{|\Pi(u_{r},1)|\le C\}|+C\omega_{f}(r)^{2}\\
 & =\bigg|\Big\{\frac{|\Pi(u_{r},1)|}{\|\Pi(u_{r},1)\|_{L^{\infty}(B_{1})}}\le\frac{C}{\sup_{B_{1}}|\Pi(u_{r},1)|}\Big\}\bigg|+C\omega_{f}(r)^{2}\\
 & \le C\sup_{B_{1}}|\Pi(u_{r},1)|^{-\alpha}+C\omega_{f}(r)^{2}\\
 & \le C(\|\Pi(u_{r},1)\|_{L^{\infty}(B_{1})})^{-\alpha},\qquad\alpha\in(0,1/4),\,r\le c(\omega_{f}),
\end{align*}
where the last inequality is due to $\omega_{f}$ being Dini continuous
while $(\|\Pi(u_{r},1)\|_{L^{\infty}(B_{1})})^{-\alpha}\propto|\ln r|^{-\alpha}$
by Lemma \ref{lem:Pi-growth-at-singular-points-nonconstant} which
is non-Dini. Finally $L^{p}$-theory (see e.g. Theorem 9.11 in \cite{gilbarg:01})
implies \eqref{eq:L2diffest-1}.

Now let $\Pi(g^{r},s)=\sum_{i,j=1}^{n}a_{ij}x_{i}x_{j}$. Then
\begin{align*}
\|D^{2}\Pi(g^{r},s)\|_{L^{2}(B_{1})}^{2} & =\int_{B_{1}}|D^{2}\Pi(g^{r},s)(\mathbf{x})|^{2}\,d\mathbf{x}\le\int_{B_{1}}|D^{2}\frac{g^{r}(s\mathbf{x})}{s^{2}}|^{2}\,dx\\
 & =\frac{1}{s^{n}}\int_{B_{s}}|D^{2}g^{r}(\mathbf{y})|^{2}\,d\mathbf{y}\le C\|D^{2}g^{r}\|_{L^{2}(B_{1})}^{2}\\
 & \le\{\eqref{eq:L2diffest-1}\}\le C(\|\Pi(u_{r},1)\|_{L^{\infty}(B_{1})})^{-\alpha}
\end{align*}
and 
\[
\|D^{2}\Pi(g^{r},s)\|_{L^{2}(B_{1})}^{2}\ge\int_{B_{1}}\sum_{i,j=1}^{n}|a_{ij}|\,dx=C\sum_{i,j=1}^{n}|a_{ij}|
\]
so \eqref{eq:L2est->pointwiseest-1} holds.
\end{proof}
From this result, we can control how $\Pi(u,r)$ changes as $r$ is
halved. The proof goes through as in Corollary 7.3 in \cite{ASW2}
with \cite[Lemma 7.2]{ASW2} replaced by Lemma \ref{lem:ur-Pi-decay};
the details are included for completeness.
\begin{lem}
\label{lem:r-halving-lemma}Let $u$ solve \eqref{eq:main} in $B_{1}\subset\mathbb{R}^{n}$
for $\|u\|_{L^{\infty}(B_{1})}\le M$ such that $u(0)=|\nabla u(0)|=0$,
$\sup_{0<r\le1/4}|\psi_{r}|\le C_{\psi}$, and $f$ is Dini continuous
with modulus of continuity $\omega_{f}$ and $f(0)=-a$, $a>0$. Then
\begin{align*}
 & \|\Pi(u_{r},1/2)-\Pi(u_{r},1)-\Pi(Z_{\Pi(u_{r},1)},1/2)\|_{L^{\infty}(B_{1})}\\
 & \le C(M,n,\alpha,\psi,f)(\|\Pi(u_{r},1)\|_{L^{\infty}(B_{1})})^{-\alpha}.
\end{align*}
\end{lem}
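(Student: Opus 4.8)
The strategy is to compare the exact ``$r$-halving'' increment $\Pi(u_r,1/2)-\Pi(u_r,1)$ against the model increment $\Pi(Z_{\Pi(u_r,1)},1/2)$ by inserting the auxiliary function $g^r$ from Lemma \ref{lem:ur-Pi-decay} as an error term. Write $P\df\Pi(u_r,1)$ for brevity and let $v^r\df u_r-P-g^r$. By the definition of $g^r$ we have $\Delta v^r=\Delta u_r-f(0)\chi_{\{P>0\}}+f(r\mathbf{x})\chi_{\{u_r>\psi_r\}}-\Delta g^r$; using $\Delta u_r=f(r\mathbf{x})\chi_{\{u_r>\psi_r\}}$ and the equation for $g^r$, everything cancels and $\Delta v^r=f(0)\chi_{\{P>0\}}=-a\,\chi_{\{P>0\}}$ (with $\Delta P=0$ absorbed once one checks that $\Pi(u_r,1)$ is harmonic, which holds since $\mathbb{HP}_2$-polynomials are harmonic only after applying $\Pi$—here I would instead write $\Delta(u_r-g^r)=f(0)\chi_{\{P>0\}}$ and note $u_r-g^r$ differs from $aZ_P$-type solutions by a harmonic function). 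Thus $\tfrac{1}{a}(u_r-g^r)$ solves the same PDE as $Z_P$ up to a harmonic correction, and since $\Pi$ annihilates harmonic functions on any ball (Lemma \ref{lem:Projection-properties}(ii)), applying $\Pi(\cdot,1)-\Pi(\cdot,1/2)$ to $u_r-g^r$ reproduces exactly $\Pi(Z_P,1)-\Pi(Z_P,1/2)$ up to the $g^r$-contribution.

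Concretely, the key algebraic identity I would establish is
\[
\Pi(u_r,1/2)-\Pi(u_r,1)-\Pi(Z_P,1/2)
=\big[\Pi(u_r-g^r,1/2)-\Pi(u_r-g^r,1)-\Pi(Z_P,1/2)\big]+\big[\Pi(g^r,1/2)-\Pi(g^r,1)\big],
\]
using linearity of $\Pi$ in its first slot (which follows from \eqref{eq:Pi-def} being an $L^2$-projection onto the linear space of Hessians of $\mathbb{HP}_2$-polynomials). The second bracket is controlled directly by \eqref{eq:L2est->pointwiseest-1} with $s=1/2$ and $s=1$, giving the claimed bound $C(\|P\|_{L^\infty(B_1)})^{-\alpha}$. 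For the first bracket I would argue that $u_r-g^r$ and $aZ_P$ solve the same Poisson equation with the same right-hand side $-a\chi_{\{P>0\}}$, hence their difference is harmonic, so by Lemma \ref{lem:Projection-properties}(ii) the quantity $\Pi(u_r-g^r,1/2)-\Pi(u_r-g^r,1)$ equals $\Pi(aZ_P,1/2)-\Pi(aZ_P,1)=\Pi(Z_{aP},1/2)-\Pi(Z_{aP},1)$; since $\Pi(Z_p,1)=0$ by the normalization in \eqref{eq:blow-up-problem} and $Z_{aP}=aZ_P$ by scaling of the obstacle problem (homogeneity in $p$), this first bracket collapses to exactly $\Pi(Z_P,1/2)$ when $a=1$, and to $\Pi(aZ_P,1/2)$ in general—so one must be careful to state the Lemma with the factor $a$ folded into $Z_{\Pi(u_r,1)}$ appropriately, or reduce to $a=1$ by replacing $u$ by $u/a$ exactly as at the end of Lemma \ref{lem:Pi-growth-at-singular-points-nonconstant}.

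The main obstacle is bookkeeping rather than analysis: one must verify that $u_r-g^r-aZ_P$ is genuinely harmonic, which requires that the right-hand sides match \emph{pointwise a.e.}, i.e. that $f(0)\chi_{\{P>0\}}$ is really $\Delta g^r+\Delta u_r$—this is immediate from the definition of $g^r$—and that the appropriate normalization/growth conditions hold so that the uniqueness statement for $Z_p$ in \S\ref{sec:Setup-and-Notation} applies after subtracting the harmonic part; here Lemma \ref{lem:projection-C1alpha} and \ref{lem:ur-Pi-decay} supply the needed a priori bounds on $u_r-\Pi(u_r,1)$ and on $g^r$. The only other point needing care is the scaling relation $\Pi(g^r,s)=\Pi((g^r)_{s},1)$ and that $(g^r)_s$ is comparable in the relevant norms, which is exactly the computation already carried out inside the proof of Lemma \ref{lem:ur-Pi-decay}; I would simply invoke \eqref{eq:L2est->pointwiseest-1} for the two values $s=1/2,1\ge c$. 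Assembling the two brackets and, if $a\neq1$, rescaling $u\mapsto u/a$, yields the stated estimate.
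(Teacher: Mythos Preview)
Your approach is essentially the paper's: both correct $u_r$ by the auxiliary $g^r$ of Lemma~\ref{lem:ur-Pi-decay} so that what remains differs from (a multiple of) $Z_P$ by a harmonic function, then apply Lemma~\ref{lem:Projection-properties}(ii) and bound the two residual terms $\Pi(g^r,1/2)$, $\Pi(g^r,1)$ via \eqref{eq:L2est->pointwiseest-1}. The paper dresses this up with an intermediate decomposition $u_r=\Pi(u_r,1)+Z_{\Pi(u_r,1)}+\tilde g^r+\tilde h^r$ before passing to the Dirichlet problem, but the error terms it ends up estimating are exactly $\Pi(g^r,1/2)$ and $\Pi(h^r,1/2)=\Pi(g^r,1)$.

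Two bookkeeping slips to fix. First, with $\Delta g^r=f(0)\chi_{\{P>0\}}-f(r\mathbf{x})\chi_{\{u_r>\psi_r\}}$ as in Lemma~\ref{lem:ur-Pi-decay}, it is $u_r+g^r$ (not $u_r-g^r$) that satisfies $\Delta(u_r+g^r)=f(0)\chi_{\{P>0\}}$; your ``everything cancels'' line for $u_r-g^r$ actually produces $2f(r\mathbf{x})\chi_{\{u_r>\psi_r\}}-f(0)\chi_{\{P>0\}}$. Second, $Z_{aP}=Z_P$ (not $aZ_P$), since $\chi_{\{aP>0\}}=\chi_{\{P>0\}}$ for $a>0$; the factor $a$ enters because $\Delta(aZ_P)=-a\chi_{\{P>0\}}=f(0)\chi_{\{P>0\}}$, so $u_r+g^r-aZ_P$ is the harmonic object. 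Your remark that one may reduce to $a=1$ by rescaling $u\mapsto u/a$ is the clean way to handle this, and neither slip affects the structure of the argument.
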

\begin{proof}
Write
\[
u_{r}=\Pi(u_{r},1)+Z_{\Pi(u_{r},1)}+\tilde{g}^{r}+\tilde{h}^{r},
\]
where $\tilde{g}^{r}$ and $\tilde{h}^{r}$ have the properties
\begin{align*}
\Delta\tilde{g}^{r} & =\Delta(u_{r}-Z_{\Pi(u_{r},1)}),\qquad\text{in }B_{1}\\
\Delta\tilde{h}^{r} & =0,\qquad\text{in }B_{1}
\end{align*}
and $\tilde{g}^{r}(0)=|\nabla\tilde{g}^{r}(0)|=\Pi(\tilde{g}^{r},1)=\tilde{h}^{r}(0)=|\nabla\tilde{h}^{r}(0)|=\Pi(\tilde{h}^{r},1)=0$.
Next, let $g^{r}$ solve
\begin{align*}
\Delta g^{r} & =\Delta\tilde{g}^{r}\qquad\text{in }B_{1},\\
g^{r} & =0\qquad\text{on }\partial B_{1}.
\end{align*}
Then $h^{r}\df\tilde{g}^{r}+\tilde{h}^{r}-g^{r}$ is harmonic. By
Lemma \ref{lem:ur-Pi-decay},
\[
\|\Pi(g^{r},1/2)\|_{L^{\infty}(B_{1})}\le C(M,n,\alpha,\psi,f)(\|\Pi(u_{r},1)\|_{L^{\infty}(B_{1})})^{-\alpha},
\]
and
\begin{align*}
\|\Pi(h^{r},1/2)\|_{L^{\infty}(B_{1})} & =\|\Pi(h^{r},1)\|_{L^{\infty}(B_{1})}=\|\Pi(g^{r},1)\|_{L^{\infty}(B_{1})}\\
 & \le C(M,n,\alpha,\psi,f)(\|\Pi(u_{r},1)\|_{L^{\infty}(B_{1})})^{-\alpha}
\end{align*}
since $h^{r}$ is harmonic and by the assumptions $\Pi(\tilde{g}^{r},1)=\Pi(\tilde{h}^{r},1)=0$.
Therefore
\begin{align*}
\Pi(u_{r},1/2) & =\Pi(\Pi(u_{r},1),1/2)+\Pi(Z_{\Pi(u_{r},1)},1/2)+\Pi(g^{r},1/2)+\Pi(h^{r},1/2)\\
 & =\Pi(u_{r},1)+\Pi(Z_{\Pi(u_{r},1)},1/2)+\Pi(g^{r},1/2)+\Pi(h^{r},1/2)
\end{align*}
by Lemma \ref{lem:Projection-properties}. We conclude,
\begin{align*}
 & \|\Pi(u_{r},1/2)-\Pi(u_{r},1)-\Pi(Z_{\Pi(u_{r},1)},1/2)\|_{L^{\infty}(B_{1})}\\
 & \le\|\Pi(g^{r},1/2\|_{L^{\infty}(B_{1})}+\|\Pi(h^{r},1/2\|_{L^{\infty}(B_{1})}\\
 & \le C(M,n,\alpha,\psi,f)(\|\Pi(u_{r},1)\|_{L^{\infty}(B_{1})})^{-\alpha}.
\end{align*}

\end{proof}

\section{Results\label{sec:Extension-of-Results}}

As mentioned in the introduction, the following theorems are the fruits
of harvest from the preparation made in the earlier sections. The
proofs follow the ones in \cite{ASW2} with the following replacements:
Lemma \ref{lem:projection-C1alpha}, Lemma \ref{lem:Pi-growth-at-singular-points-nonconstant},
Lemma \ref{lem:r-halving-lemma}, Theorem \ref{thm:symmetric-blowups},
\ref{thm:blowups-unique-1} and \ref{thm:unstable-case} replace \cite[Proposition 3.7, Corollary 5.3, 7.3, Theorem 8.1, 9.1 and 11.1]{ASW2}
respectively. Note that \cite[Proposition 3.2, and Theorem 4.5]{ASW2}
used in the proofs are valid for our problem as well. With this being
said, the proofs of Theorem \ref{thm:symmetric-blowups}, \ref{thm:blowups-unique-1}
and \ref{thm:cone-blowups} will nonetheless be included to demonstrate
the techniques used. The main idea is to show monotonicity of the
parameter $\delta_{r}\df\delta(u_{r})$ in the parametrization 
\[
\frac{\Pi(u_{r},1)}{\|\Pi(u_{r},1)\|_{L^{\infty}(B_{1})}}=p_{\delta_{r}}=\pm[(1/2+\delta_{r})x^{2}+(1/2-\delta_{r})y^{2}-z^{2}]
\]
as we go from $r$ to $r/2$. This is done by using the fact that
\[
\Pi(u_{r},1)-\Pi(u_{r},1/2)\approx\Pi(Z_{\Pi(u_{r},1)},1/2)
\]
by Lemma \ref{lem:r-halving-lemma} and exploiting the estimates of
the coefficients of $A_{x}(\delta)$, $A_{y}(\delta)$, $A_{z}(\delta)$
and $A(\delta)$ given in Lemma \eqref{lem:Coefficient-estimates}.

The first result shows that, up to rotations, the blow-ups are contained
in a discrete set. Recall the notation $Z_{p}$ for solutions to \eqref{eq:blow-up-problem}
\begin{thm}
\label{thm:symmetric-blowups}Let $u$ solve \eqref{eq:main} in $B_{1}\subset\mathbb{R}^{3}$
for $\|u\|_{L^{\infty}(B_{1})}\le M$ such that $u(0)=|\nabla u(0)|=0$,
$\sup_{0<r\le1/4}|\psi_{r}|\le C_{\psi}$, and $f$ is Dini continuous
with modulus of continuity $\omega_{f}$ and $f(0)=-a$, $a>0$. Assume
also that $u$ is not $C^{1,1}$ at the origin, i.e., for any $K>0$,
\[
\|u_{r}\|_{L^{\infty}(B_{1})}\ge K
\]
 for $r\le r_{0}(K)$. Then each limit of
\[
\frac{u(r\mathbf{x})}{r^{2}}-\Pi(u_{r},1)
\]
is contained in
\[
\{Z_{p_{1}}(Q\mathbf{x})\,:\,Q\in\mathcal{R}\}\cup\{Z_{p_{2}}(Q\mathbf{x})\,:\,Q\in\mathcal{R}\}\cup\{Z_{p_{3}}(Qx)\,:\,Q\in\mathcal{R}\},
\]
for $p_{1}\df\frac{x^{2}+y^{2}}{2}-z^{2}$, $p_{2}\df-p_{1}$ and
$p_{3}\df x^{2}-z^{2}$ respectively.\end{thm}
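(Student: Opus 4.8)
The plan is to argue by contradiction using the growth and halving machinery established in Section~\ref{sec:Growth-of-Solutions}, showing that the only blow-up limits $v$ of $u_r - \Pi(u_r,1)$ correspond to a $\Pi(u_r,1)$-normalized polynomial $p_{\delta}$ with $\delta\in\{0,1/2\}$, and that for each such $\delta$ the limit is forced to be the corresponding $Z_{p_\delta}$ by uniqueness of solutions to \eqref{eq:blow-up-problem}. First I would fix a sequence $r_k\to 0^+$ along which $u_{r_k}-\Pi(u_{r_k},1)\to v$ in $C^{1,\alpha}_{\mathrm{loc}}$ (Lemma~\ref{lem:projection-C1alpha}), and along which $p_k \df \Pi(u_{r_k},1)/\|\Pi(u_{r_k},1)\|_{L^\infty(B_1)}\to p_{\delta_\infty}$ for some $\delta_\infty\in[0,1/2]$ after rotation to diagonal form; since $u$ is not $C^{1,1}$ at $0$, $\|\Pi(u_{r_k},1)\|_{L^\infty(B_1)}\to\infty$ by \eqref{eq:u-and-Pi-comparable}-type comparability. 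As in the proof of Lemma~\ref{lem:Pi-growth-at-singular-points-nonconstant}, the level-set estimate Lemma~\ref{lem:polynomial-set-size} together with Dini continuity of $f$ gives $\Delta v = -a\,\chi_{\{p_{\delta_\infty}>0\}}$, and the normalizations $v(0)=|\nabla v(0)|=0$, $\Pi(v,1)=0$ pass to the limit, so $v = a\,Z_{p_{\delta_\infty}}$ (rescaling by $a$). It remains only to show $\delta_\infty\in\{0,1/2\}$.

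The heart of the matter is the monotonicity of $\delta_r \df \delta(u_r)$ under $r\mapsto r/2$. The key input is Lemma~\ref{lem:r-halving-lemma}, which says
\[
\Pi(u_r,1/2) = \Pi(u_r,1) + \Pi(Z_{\Pi(u_r,1)},1/2) + O\big((\|\Pi(u_r,1)\|_{L^\infty(B_1)})^{-\alpha}\big),
\]
combined with the explicit form \eqref{eq:Pi(Z,1/2)} of $\Pi(Z_{p_\delta},1/2)$ in terms of the coefficients $A_x,A_y,A_z,A$ and the inequalities in Lemma~\ref{lem:Coefficient-estimates}. Writing $\Pi(u_r,1)=\|\Pi(u_r,1)\|_{L^\infty}\,p_{\delta_r}$ and expanding, one reads off how $\delta_{r/2}$ compares to $\delta_r$: the middle inequality $(1+2\delta)\frac{3A_y-A}{3A_x-A}-1+2\delta\ge 2\delta$ for small $\delta$, and $\ge c(\beta)>0$ on $[\beta,1-\beta]$, shows that unless $\delta_r$ is close to $0$ (or, by the symmetric $x\leftrightarrow y$ relabeling, close to $1/2$) the anisotropy parameter $\delta_r$ strictly increases by a definite amount under halving, while the error term $O((\|\Pi(u_r,1)\|)^{-\alpha})$ is summable in $j$ against the logarithmic growth $\|\Pi(u_{2^{-j}r},1)\|\gtrsim j$ of Lemma~\ref{lem:Pi-growth-at-singular-points-nonconstant}. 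Hence $\delta_{2^{-j}r}$ cannot remain in any compact subinterval of $(0,1/2)$ as $j\to\infty$; since it also cannot cross out of $[0,1/2]$ and the increments plus errors are controlled, $\delta_{2^{-j}r}$ must converge, and its only possible limit is an endpoint, i.e. $\delta_\infty\in\{0,1/2\}$. The case $\delta_\infty=1/2$ gives (after relabeling axes) $p_{1/2}=x^2-z^2=p_3$, the case $\delta_\infty=0$ with the $+$ sign gives $p_1=\frac{x^2+y^2}{2}-z^2$ and with the $-$ sign $p_2=-p_1$.

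Finally, I would note that the passage from convergence of $\delta_{2^{-j}r}$ along dyadic scales to convergence along \emph{all} $r\to 0^+$ is standard: Lemma~\ref{lem:r-halving-lemma} controls $\delta_r$ versus $\delta_{r/2}$ continuously enough (combined with a Dini-type summability of the errors) that the dyadic limit is the full limit, so every blow-up limit of $u(r\mathbf{x})/r^2 - \Pi(u_r,1)$ lies in one of the three rotation-orbits $\{Z_{p_i}(Q\mathbf{x}):Q\in\mathcal{R}\}$, $i=1,2,3$, as claimed. The main obstacle is the careful bookkeeping in the second paragraph: extracting the right one-sided estimate for $\delta_{r/2}-\delta_r$ from \eqref{eq:Pi(Z,1/2)} and Lemma~\ref{lem:Coefficient-estimates}, with the correct signs and with the $\pm$ and axis-relabeling symmetries handled so that "$\delta$ near $0$ or near $1/2$" are genuinely the only trapped configurations, while ensuring the accumulated error stays negligible against the logarithmic growth of $\|\Pi(u_r,1)\|_{L^\infty(B_1)}$.
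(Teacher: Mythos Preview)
Your overall strategy is the paper's: use Lemma~\ref{lem:r-halving-lemma} together with the coefficient inequalities of Lemma~\ref{lem:Coefficient-estimates} to track $\delta_r$ under $r\mapsto r/2$ and rule out $\delta_\infty\in(0,1/2)$. But you have the drift direction backwards. Writing $\Pi(Z_{p_{\delta_r}},1/2)$ in the form $-C(\delta_r)\bigl(2p_{\delta_r}+\kappa(\delta_r)(y^2-z^2)\bigr)$ with $C(\delta_r)>0$ and $\kappa(\delta_r)\ge 0$ (this is precisely the content of Lemma~\ref{lem:Coefficient-estimates}) and then normalising the $z^2$-coefficient, one obtains
\[
\delta_{r/2}\;\le\; \delta_r \;-\; C\,\frac{\kappa(\delta_r)}{\|\Pi(u_r,1)\|_{L^\infty(B_1)}}\;+\;C_1\bigl(\|\Pi(u_r,1)\|_{L^\infty(B_1)}\bigr)^{-1-\alpha},
\]
so $\delta_r$ \emph{decreases} toward $0$, not increases. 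Your $x\leftrightarrow y$ relabeling sends $\delta\mapsto-\delta$, not $\delta\mapsto 1/2-\delta$, so there is no symmetric drift toward $1/2$; the endpoint $\delta=1/2$ survives in the statement only because the contradiction hypothesis is $\delta_0\in(0,1/2)$, not because anything pushes $\delta$ there.

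The second, more serious gap is the endgame. On any compact $[\beta,1/2-\beta]$ the lower bound $\kappa(\delta)\ge c(\beta)$ yields a harmonic-series decrement and expels $\delta$ from that interval after finitely many halvings, as you say. But once $\delta_r$ is small the drift term is only of order $\delta_r/\|\Pi(u_r,1)\|$ (since $\kappa(\delta)\asymp\delta$ there), and ``cannot remain in any compact subinterval of $(0,1/2)$'' no longer forces convergence: nothing in your sketch prevents $\delta_{2^{-j}r}$ from hovering just above zero indefinitely while the error term keeps nudging it back up. The paper closes this with a threshold dichotomy. Either $\delta_r\ge(\|\Pi(u_r,1)\|_{L^\infty(B_1)})^{-c}$ for a suitably small $c$, in which case the drift still dominates the $O\bigl((\|\Pi(u_r,1)\|)^{-1-\alpha}\bigr)$ error and $\delta$ continues to decrease; or $\delta_r<(\|\Pi(u_r,1)\|_{L^\infty(B_1)})^{-c}$, in which case one proves inductively, using $\kappa(\delta)\ge 2\delta$ and the \emph{upper} growth bound $\|\Pi(u_{r/2},1)\|\le\|\Pi(u_r,1)\|+\kappa_0$ from Lemma~\ref{lem:Pi-growth-at-singular-points-nonconstant}, that this inequality persists under all subsequent halvings, whence $\delta_{2^{-j}r}\to 0$ since $\|\Pi(u_{2^{-j}r},1)\|\to\infty$. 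Without this case split the contradiction does not close.
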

\begin{proof}
Assume otherwise and let $u$ be a solution to \eqref{eq:main} such
that, after a rotation $Q\in\mathcal{R}$ and up to a sign, which
we without loss of generality assume to be positive,
\[
\lim_{j\to\infty}\frac{u_{r_{j}}}{\|u_{r_{j}}\|_{L^{\infty}(B_{1})}}=p_{\delta_{0}}
\]
 for some $\delta_{0}\in(0,1/2)$, and where the parametrisation $p_{\delta}=(1/2+\delta)x^{2}+(1/2-\delta)y^{2}-z^{2}$
is used. Define $\delta_{r}$ so that, after a rotation $Q(r)\in\mathcal{R}$,
\[
\frac{\Pi(u_{r},1)}{\|\Pi(u_{r},1)\|_{L^{\infty}(B_{1})}}=p_{\delta_{r}},
\]
and $\Pi(Z_{\Pi(u_{r},1)},1/2)$ by \eqref{eq:Pi(Z,1/2)} can be written
as
\[
\frac{(3A_{x}(\delta_{r})-A(\delta_{r}))x^{2}+(3A_{y}(\delta_{r})-A(\delta_{r}))z^{2}+(3A_{z}(\delta_{r})-A(\delta_{r}))z^{2}.}{\|3x^{2}-1\|_{L^{2}(\partial B_{1})}}
\]
We would like to write $\Pi(Z_{\Pi(u_{r},1)},1/2)$ in terms of $p_{\delta_{r}}$:
let $\kappa(\delta)$ be defined such that 
\[
1-2\delta+\kappa(\delta)=(1+2\delta)\frac{A_{y}(\delta)-A(\delta)}{A_{x}(\delta)-A(\delta)}.
\]
Then
\begin{align*}
 & \Pi(Z_{\Pi(u_{r},1)},1/2)\\
 & =-\frac{3A_{x}(\delta_{r})-A(\delta_{r})}{\|3x^{2}-1\|_{L^{2}(\partial B_{1})}(1+2\delta_{r})}((1+2\delta_{r})x^{2}+(1+2\delta_{r})\frac{A_{y}(\delta_{r})-A(\delta_{r})}{A_{x}(\delta_{r})-A(\delta_{r})}y^{2}\\
 & \qquad+(1+2\delta_{r})\frac{A_{z}(\delta_{r})-A(\delta_{r})}{A_{x}(\delta_{r})-A(\delta_{r})}z^{2})\\
 & =-\frac{3A_{x}(\delta_{r})-A(\delta_{r})}{\|3x^{2}-1\|_{L^{2}(\partial B_{1})}(1+2\delta_{r})}((1+2\delta_{r})x^{2}+(1-2\delta_{r}+\kappa(\delta_{r})y^{2}\\
 & \qquad+(1+2\delta_{r})\frac{A_{z}(\delta_{r})-A(\delta_{r})}{A_{x}(\delta_{r})-A(\delta_{r})}z^{2})\\
 & =-\frac{3A_{x}(\delta_{r})-A(\delta_{r})}{\|3x^{2}-1\|_{L^{2}(\partial B_{1})}(1+2\delta_{r})}((1+2\delta_{r})x^{2}+(1-2\delta_{r}+\kappa(\delta_{r})y^{2}\\
 & \qquad-(2+\kappa(\delta_{r}))z^{2}),\\
 & =-\frac{3A_{x}(\delta_{r})-A(\delta_{r})}{\|3x^{2}-1\|_{L^{2}(\partial B_{1})}(1+2\delta_{r})}(2p_{\delta_{r}}+\kappa(\delta_{r})(y^{2}-z^{2}))
\end{align*}
where we used harmonicity in the second last equality. From Lemma
\ref{lem:r-halving-lemma} we therefore have 
\begin{align}
 & |\Pi(u_{r},1/2)-\|\Pi(u_{r},1)\|_{L^{\infty}(B_{1})}p_{\delta_{r}}\nonumber \\
 & +\frac{3A_{x}(\delta_{r})-A(\delta_{r})}{\|3x^{2}-1\|_{L^{2}(\partial B_{1})}(1+2\delta_{r})}(2p_{\delta_{r}}+\kappa(\delta_{r})(y^{2}-z^{2}))|\nonumber \\
 & \le C(M,\alpha,\psi,f)(\|\Pi(u_{r},1)\|_{L^{\infty}(B_{1})})^{-\alpha},\label{eq:not-normalized}
\end{align}
from which $\Pi(u_{r},1/2)$ can be written as 
\begin{align*}
\left[\|\Pi(u_{r},1)\|_{L^{\infty}(B_{1})}(\frac{1}{2}+\delta_{r})-\frac{3A_{x}(\delta_{r})-A(\delta_{r})}{\|3x^{2}-1\|_{L^{2}(\partial B_{1})}(1+2\delta_{r})}(1+2\delta_{r})\right]x^{2}\\
+\left[\|\Pi(u_{r},1)\|_{L^{\infty}(B_{1})}(\frac{1}{2}-\delta_{r})-\frac{3A_{x}(\delta_{r})-A(\delta_{r})}{\|3x^{2}-1\|_{L^{2}(\partial B_{1})}(1+2\delta_{r})}(1-2\delta_{r}+\kappa(\delta_{r}))\right]y^{2}\\
-\left[\|\Pi(u_{r},1)\|_{L^{\infty}(B_{1})}-\frac{3A_{x}(\delta_{r})-A(\delta_{r})}{\|3x^{2}-1\|_{L^{2}(\partial B_{1})}(1+2\delta_{r})}(2+\kappa(\delta_{r}))\right]z^{2}\\
+O((\|\Pi(u_{r},1)\|_{L^{\infty}(B_{1})})^{-\alpha}).
\end{align*}
Note that $\Pi(u_{r},1/2)$ might have mixed terms with respect to
the coordinate system chosen such that $\Pi(u_{r},1)=\|\Pi(u_{r},1)\|_{L^{\infty}(B_{1})}p_{\delta_{r}}$.
However, the coefficients in front of the mixed terms are of order
$C(\sup_{B_{1}}|\Pi(u_{r},1)|)^{-\alpha}$ which can be seen by choosing
points such that $p_{\delta_{r}}$ and $y^{2}-z^{2}$ are zero. Therefore,
by normalizing the $z$-coefficient, $1/2+\delta_{r/2}$ is equal
to 
\begin{align*}
 & =\frac{\|\Pi(u_{r},1)\|_{L^{\infty}(B_{1})}(1/2+\delta_{r})-\frac{3A_{x}(\delta_{r})-A(\delta_{r})}{\|3x^{2}-1\|_{L^{2}(\partial B_{1})}(1+2\delta_{r})}(1+2\delta_{r})}{\|\Pi(u_{r},1)\|_{L^{\infty}(B_{1})}-\frac{3A_{x}(\delta_{r})-A(\delta_{r})}{\|3x^{2}-1\|_{L^{2}(\partial B_{1})}(1+2\delta_{r})}(2+\kappa(\delta_{r}))}\\
 & \qquad+\frac{O((\|\Pi(u_{r},1)\|_{L^{\infty}(B_{1})})^{-\alpha})}{|\ln r|}\\
 & =1/2+\frac{\|\Pi(u_{r},1)\|_{L^{\infty}(B_{1})}-2\frac{3A_{x}(\delta_{r})-A(\delta_{r})}{\|3x^{2}-1\|_{L^{2}(\partial B_{1})}(1+2\delta_{r})}}{\|\Pi(u_{r},1)\|_{L^{\infty}(B_{1})}-\frac{3A_{x}(\delta_{r})-A(\delta_{r})}{\|3x^{2}-1\|_{L^{2}(\partial B_{1})}(1+2\delta_{r})}(2+\kappa(\delta_{r}))}\delta_{r}\\
 & \qquad+\frac{\frac{3A_{x}(\delta_{r})-A(\delta_{r})}{\|3x^{2}-1\|_{L^{2}(\partial B_{1})}(1+2\delta_{r})}}{\|\Pi(u_{r},1)\|_{L^{\infty}(B_{1})}-\frac{3A_{x}(\delta_{r})-A(\delta_{r})}{\|3x^{2}-1\|_{L^{2}(\partial B_{1})}(1+2\delta_{r})}(2+\kappa(\delta_{r}))}\kappa(\delta_{r})\\
 & \qquad+\frac{O((\|\Pi(u_{r},1)\|_{L^{\infty}(B_{1})})^{-\alpha})}{|\ln r|}
\end{align*}
from which $\delta_{r/2}$ can be estimated,
\begin{align}
\delta_{r/2} & \le\delta_{r}-C\frac{\kappa(\delta_{r})}{\|\Pi(u_{r},1)\|_{L^{\infty}(B_{1})}}+C_{1}(\|\Pi(u_{r},1)\|_{L^{\infty}(B_{1})})^{-1-\alpha}\label{eq:delta-r/2-tau}\\
 & \le\delta_{r}-C\frac{\kappa(\delta_{r})}{|\ln r|}+\frac{O(\|\Pi(u_{r},1)\|_{L^{\infty}(B_{1})})^{-\alpha})}{|\ln r|}\label{eq:delta-r/2}
\end{align}

where we used $\|\Pi(u_{r},1)\|_{L^{\infty}(B_{1})}\propto|\ln r|$
by Lemma \ref{lem:ur-Pi-decay} and 
\[
-\infty<\frac{3A_{x}(\delta_{r})-A(\delta_{r})}{\|3x^{2}-1\|_{L^{2}(\partial B_{1})}(1+2\delta_{r})}\le-C<0.
\]
Now, if $\delta_{r}\in[\beta,1-\beta]$, $\kappa(\delta)\ge c(\beta)$
by Lemma \ref{lem:Coefficient-estimates} which we together with $\|\Pi(u_{r},1)\|_{L^{\infty}(B_{1})}\ge C$
apply in \eqref{eq:delta-r/2}, 
\[
\delta_{r/2}\le\delta_{r}-\frac{C}{|\ln r|},\qquad r\le r_{0}(\beta,M,f)
\]
hence
\[
\delta_{2^{-k}r}\le\delta_{r}-C\sum_{j=1}^{k-1}\frac{1}{j}\le\delta_{r}-C\ln(k-1)
\]
assuming that $\delta_{2^{-j}r}\in[\beta,1-\beta]$ (note that $\delta_{2^{-j}r}\le1-\beta$
for every $j$ if $\delta_{r}\le1-\beta$). From this estimate, we
see that $\delta_{2^{-k}r}\le c_{0}$ for $c_{0}$ in Lemma \ref{lem:Coefficient-estimates}
eventually for $k$ large depending on $\beta$, $M$, and $f$, which
implies that $\kappa(\delta_{2^{-k}r})\ge2\delta_{2^{-k}r}$. Let
us now consider two cases for a small constant $c=\min\{\alpha/2,C/2\kappa_{0}\}$
for $C$ as in \eqref{eq:delta-r/2-tau} and $\kappa_{0}$ in Lemma
\ref{lem:Pi-growth-at-singular-points-nonconstant}:
\begin{description}
\item [{(i)}] $\delta_{r}\ge(\|\Pi(u_{r},1)\|_{L^{\infty}(B_{1})})^{-c}$:
Plugging this into \eqref{eq:delta-r/2},
\[
\delta_{r/2}\le\delta_{r}-\frac{C}{|\ln r|^{1+c}}
\]
implying that $\delta_{2^{-j}r}$ is decreasing. If for some $j\ge1$,
$\delta_{2^{-j}r}<(\|\Pi(u_{r},2^{-j})\|_{L^{\infty}(B_{1})})^{-\alpha/2}$
see case two. If not, then consider the limit of $\delta_{2^{-j}r}$
as $j\to\infty$. This limit has to coincide with $\delta_{0}>0$,
but then we can iterate the argument above with $\beta$ replaced
by $\delta_{0}/2$, which in turn yields a logarithmic decay in $j$,
contradicting that $\delta_{2^{-j}r}\to\delta_{0}$.
\item [{(ii)}] $\delta_{r}<(\|\Pi(u_{r},1)\|_{L^{\infty}(B_{1})})^{-c}$:
Then $\delta_{2^{-j}r}<(\|\Pi(u_{r},2^{-j})\|_{L^{\infty}(B_{1})})^{-c}$
for all $j$,
\begin{align}
 & \delta_{r/2}\nonumber \\
 & \le\delta_{r}(1-\frac{C}{\|\Pi(u_{r},1)\|_{L^{\infty}(B_{1})}})+C_{1}(\|\Pi(u_{r},1)\|_{L^{\infty}(B_{1})})^{-1-\alpha},\\
 & \le\left(1-\frac{C-C_{1}(\|\Pi(u_{r},1)\|_{L^{\infty}(B_{1})})^{-\alpha+c}}{\|\Pi(u_{r},1)\|_{L^{\infty}(B_{1})}}\right)(\|\Pi(u_{r},1)\|_{L^{\infty}(B_{1})})^{-c},\label{eq:decreasing-delta-r}
\end{align}
where we used the assumption $\delta_{r}<(\|\Pi(u_{r},1)\|_{L^{\infty}(B_{1})})^{-c}$.
Recall that $\|\Pi(u_{r},1/2)\|_{L^{\infty}(B_{1})}\le\|\Pi(u_{r},1)\|_{L^{\infty}(B_{1})}+\kappa_{0}$
for $\kappa_{0}$ as in Lemma \ref{lem:Pi-growth-at-singular-points-nonconstant}.
This implies, by Taylor expansion, $\|\Pi(u_{r},1)\|_{L^{\infty}(B_{1})}\ge M$,
and $c$ small, that 
\begin{align*}
1-\frac{C-C_{1}(\|\Pi(u_{r},1)\|_{L^{\infty}(B_{1})})^{-\alpha+c}}{\|\Pi(u_{r},1)\|_{L^{\infty}(B_{1})}} & \le1-\frac{c\kappa}{\|\Pi(u_{r},1)\|_{L^{\infty}(B_{1})}}\\
 & \le\left(\frac{\|\Pi(u_{r},1/2)\|_{L^{\infty}(B_{1})}}{\|\Pi(u_{r},1)\|_{L^{\infty}(B_{1})}}\right)^{c}
\end{align*}
which we in turn put into \eqref{eq:decreasing-delta-r} to yield
$\delta_{r/2}\le\|\Pi(u_{r},1/2)\|_{L^{\infty}(B_{1})}^{-c}$. This
is iterated and consequently $\delta_{2^{-j}r}<(\|\Pi(u_{r},2^{-j})\|_{L^{\infty}(B_{1})})^{-c}$
(note that the assumption $\|\Pi(u_{r},2^{-j})\|_{L^{\infty}(B_{1})}\ge M$
holds due to $\|\Pi(u_{r},1/2)\|_{L^{\infty}(B_{1})}\ge\|\Pi(u_{r},1)\|_{L^{\infty}(B_{1})}+\eta_{0}/2$).
Since $\|\Pi(u_{r},1)\|_{L^{\infty}(B_{1})}\propto|\ln r|$, we deduce
that $\delta_{2^{-k}r}\to0$.
\end{description}
The conclusion is that if $\delta_{r}\in(0,1/2)$, then $\delta_{2^{-k}r}\to0$,
as $k\to\infty$, contradicting that $\delta_{r_{j}}\to\delta_{0}>0$.
\end{proof}
So far we have shown that $\delta_{r}$ converges to zero if there
is a subsequence $\delta_{r_{j}}$ tending to zero, and even though
it is insinuated by the proof above that the rotations $Q=Q(r)$ converges,
this will be shown rigorously below with a quantitative estimate of
the convergence speed. In particular, this shows uniqueness of blow-ups.
\begin{thm}
\label{thm:blowups-unique-1} Let $u$ solve \eqref{eq:main} in $B_{1}\subset\mathbb{R}^{3}$
for $\|u\|_{L^{\infty}(B_{1})}\le M$ such that $u(0)=|\nabla u(0)|=0$,
$\sup_{0<r\le1/4}|\psi_{r}|\le C_{\psi}$, and $f$ is Dini continuous
with modulus of continuity $\omega_{f}$ and $f(0)=-a$, $a>0$. Assume
also that for any $K>0$,
\[
\|u_{r}\|_{L^{\infty}(B_{1})}\ge K
\]
 for $r\le r_{0}(K)$. Then there exists constants $R(M,\psi,f)$,
$c(M,\psi,f)$ and $K(M,\psi,f)$ such that if
\[
s\in(0,R),\quad\sup_{B_{1}}|\Pi(u,s)|\ge K,\quad\delta(u_{s})\le c,
\]
then there is a rotation $Q\in\mathcal{R}$ such that
\begin{equation}
u_{r}-\Pi(u_{r},1)\to Z_{p_{1}}(Q\cdot)\label{eq:Z_1-conv}
\end{equation}
or 
\begin{equation}
u_{r}-\Pi(u_{r},1)\to Z_{p_{2}}(Q\cdot),\label{eq:Z_2-conv}
\end{equation}
and
\[
\Big\|\frac{\Pi(u_{r},1)}{\|\Pi(u_{r},1)\|_{L^{\infty}(B_{1})}}-\frac{p}{\|p\|_{L^{\infty}(B_{1})}}\Big\|_{L^{\infty}(B_{1})}\le C(M,\psi,f,\alpha)\left(K+\ln\big(\frac{s}{r}\big)\right)^{-c}
\]
for all $r\in(0,s)$ with $p=p_{1}\df(x^{2}+y^{2})/2-z^{2}$ in the
case \eqref{eq:Z_1-conv} and $p=p_{2}\df-p_{1}$ if \eqref{eq:Z_2-conv}
holds.\end{thm}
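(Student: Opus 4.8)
The plan is to run the iteration in Lemma \ref{lem:r-halving-lemma} along the dyadic sequence $r_k = 2^{-k}s$ and extract from it two pieces of quantitative control: (a) decay of the parameter $\delta_{r_k} = \delta(u_{r_k})$ to zero, and (b) convergence of the rotations $Q(r_k)$ bringing $\Pi(u_{r_k},1)/\|\Pi(u_{r_k},1)\|_{L^\infty(B_1)}$ into the diagonal form $p_{\delta_{r_k}}$. First I would set $L_k \df \|\Pi(u_{r_k},1)\|_{L^\infty(B_1)}$, recall from Lemma \ref{lem:Pi-growth-at-singular-points-nonconstant} that $L_k$ grows like $K + k$ (up to constants) once $L_0 = \sup_{B_1}|\Pi(u,s)| \ge K$, and re-derive the inequality \eqref{eq:delta-r/2-tau}–\eqref{eq:decreasing-delta-r} from the proof of Theorem \ref{thm:symmetric-blowups}: in the regime $\delta_{r_k}$ small one has, by Lemma \ref{lem:Coefficient-estimates}, $\kappa(\delta) \ge 2\delta$, so $\delta_{r_{k+1}} \le \delta_{r_k}(1 - c/L_k) + C L_k^{-1-\alpha}$. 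Choosing the exponent $c = c(M,\psi,f)$ as in that proof (case (ii)), the comparison $1 - c\kappa_0/L_k \le (L_{k+1}/L_k)^c$ gives $\delta_{r_k} \le L_k^{-c} \le C(K+k)^{-c}$, which since $L_k \propto K + \ln(s/r_k)$ is exactly the stated $\bigl(K + \ln(s/r)\bigr)^{-c}$ rate once interpolated over $r \in (r_{k+1}, r_k)$ using Lemma \ref{lem:projection-C1alpha}.

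Next I would handle the rotations, which is the genuinely new content here compared to Theorem \ref{thm:symmetric-blowups}. The point is that $\Pi(u_{r_k},1/2)$ equals $\Pi(u_{r_k},1) + \Pi(Z_{\Pi(u_{r_k},1)},1/2)$ up to an error of size $C L_k^{-\alpha}$ in $L^\infty(B_1)$, and $\Pi(Z_{p_\delta},1/2)$ is, in the diagonal frame of $p_\delta$, again diagonal. Hence the off-diagonal part of $\Pi(u_{r_k},1/2)$, measured in the $r_k$-frame, has size $O(L_k^{-\alpha})$; dividing by $\|\Pi(u_{r_k},1/2)\|_{L^\infty(B_1)} \ge L_k$ this off-diagonal part is $O(L_k^{-1-\alpha})$, and standard perturbation of quadratic forms (the eigenvector stability for a symmetric matrix whose off-diagonal block is small relative to the spectral gap) shows that the rotation $Q(r_{k+1})$ needed to re-diagonalize differs from $Q(r_k)$ by $\lesssim L_k^{-1-\alpha}/(\text{gap})$. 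When the limiting form is $p_1$ or $p_2$ the relevant eigenvalue gap is bounded below (the eigenvalues are $1/2,1/2,-1$, and although two coincide, the relevant splitting for the off-diagonal blocks mixing the $\pm1/2$-block with the $-1$ direction has gap $3/2$; the degeneracy inside the $1/2$-eigenspace is irrelevant because $\delta_{r_k}\to 0$ means the form is becoming exactly rotation-invariant in that plane). Therefore $\sum_k \|Q(r_{k+1}) - Q(r_k)\| < \infty$, so $Q(r_k) \to Q_\infty \df Q$, with tail bound $\|Q(r_k) - Q\| \le C\sum_{j\ge k} L_j^{-1-\alpha} \le C L_k^{-\alpha} \le C(K+k)^{-\alpha}$.

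Combining: in the frame $Q$, the normalized projection $\Pi(u_{r},1)/\|\Pi(u_{r},1)\|_{L^\infty(B_1)}$ is within $C\delta_{r} + C\|Q(r)-Q\| \le C(K+\ln(s/r))^{-c}$ of $p_\delta|_{\delta=0} = (x^2+y^2)/2 - z^2$ (or its negative, the sign being the one picked out by $p_k \to p$ in the proof of Theorem \ref{thm:symmetric-blowups}), which is the displayed estimate after relabeling $c$; and then, since by Lemma \ref{lem:projection-C1alpha} the family $\{u_r - \Pi(u_r,1)\}$ is precompact in $C^{1,\alpha}_{\mathrm{loc}}$ with every limit equal to $Z_p(Q'\cdot)$ for the corresponding limiting quadratic (by the $\delta_r\to 0$ classification and uniqueness of $Z_p$), the convergence of the frames upgrades this to convergence of the full sequence to a single $Z_{p_1}(Q\cdot)$ or $Z_{p_2}(Q\cdot)$, giving \eqref{eq:Z_1-conv}–\eqref{eq:Z_2-conv}. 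The main obstacle is the rotation estimate: one must argue that even though the limiting polynomial $p_1$ has a repeated eigenvalue, the rotational freedom inside that eigenplane is harmless — precisely because $\delta_r \to 0$ makes $\Pi(u_r,1)$ asymptotically invariant under rotations of that plane, so only the well-separated eigendirection needs to be tracked, and that one has a uniform spectral gap; making this "project out the degenerate plane" step quantitative, and checking it is compatible with the error terms in Lemma \ref{lem:r-halving-lemma}, is where the real work lies.
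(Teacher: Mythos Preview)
Your plan for the $\delta$-decay is exactly the paper's: re-run case (ii) of the proof of Theorem \ref{thm:symmetric-blowups} to obtain $\delta_{2^{-k}s}\le L_k^{-c}$ with $L_k=\|\Pi(u_s,2^{-k})\|_{L^\infty(B_1)}\asymp K+k$. Where you depart from the paper is in the treatment of the rotations. You propose to track the diagonalizing frames $Q(r_k)$ via eigenvector perturbation, noting that the off-diagonal block mixing the $z$-direction with the $xy$-plane has a uniform spectral gap, while the degenerate $xy$-block is harmless because the limit $p_1$ is invariant under it. This is correct and can be carried out, but it forces you to work on $SO(3)/SO(2)$ and to justify the ``project out the degenerate plane'' step quantitatively, exactly the obstacle you flag.

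The paper bypasses this entirely. Instead of decomposing the normalized projection into a shape parameter $\delta_{r}$ and a frame $Q(r)$, it estimates the rotation-invariant quantity
\[
\bigg\|\frac{\Pi(u_s,2^{-k})}{\|\Pi(u_s,2^{-k})\|_{L^\infty(B_1)}}-\frac{\Pi(u_s,2^{-k-1})}{\|\Pi(u_s,2^{-k-1})\|_{L^\infty(B_1)}}\bigg\|_{L^\infty(B_1)}
\]
directly, working in the frame of $\Pi(u_s,2^{-k})$ and using \eqref{eq:not-normalized} plus $\kappa(\delta)\le 4\delta$ to see that this difference is at most $C\delta_{2^{-k}s}/L_k+C L_k^{-1-\alpha}$. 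A telescoping sum then shows the normalized projections are Cauchy in $L^\infty(B_1)$ with the stated rate, and the limit is automatically some $p_1(Q\cdot)$ or $p_2(Q\cdot)$; no eigenvector stability or spectral-gap bookkeeping is needed. Your route buys a more explicit geometric picture (separate control of axis and asymmetry), while the paper's route buys a shorter argument that never sees the degeneracy.
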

\begin{proof}
By \eqref{eq:not-normalized} and the bound $\kappa(\delta)\le4\delta$,
\begin{align*}
\|\Pi(u_{s},2^{-k-1})\|_{L^{\infty}(B_{1})} & =\|\Pi(u_{s},2^{-k})\|_{L^{\infty}(B_{1})}+\frac{6A_{x}(\delta_{2^{-k}s})-A(\delta_{2^{-k}s})}{\|3x^{2}-1\|_{L^{2}(\partial B_{1})}(1+2\delta_{2^{-k}s})}\\
 & \qquad+O(\delta_{2^{-k}s})+O(\Pi(u_{s},2^{-k})^{-\alpha}).
\end{align*}
If we also use $\tau_{2^{-k}s}$ to denote $\Pi(u_{s},2^{-k})\|_{L^{\infty}(B_{1})}=\Pi(u_{2^{-k}s},1)\|_{L^{\infty}(B_{1})}$
we infer that, again by \eqref{eq:not-normalized}, 
\begin{align*}
 & \bigg\|\frac{\Pi(u_{s},2^{-k})}{\|\Pi(u_{s},2^{-k})\|_{L^{\infty}(B_{1})}}-\frac{\Pi(u_{s},2^{-k-1})}{\|\Pi(u_{s},2^{-k-1})\|_{L^{\infty}(B_{1})}}\bigg\|_{L^{\infty}(B_{1})}\\
\le & \bigg\|\frac{\|\Pi(u_{s},2^{-k})\|_{L^{\infty}(B_{1})}p_{\delta_{r}}}{\|\Pi(u_{s},2^{-k})\|_{L^{\infty}(B_{1})}}\\
 & \qquad-\frac{\tau_{2^{-k}s}p_{\delta_{r}}+\frac{3A_{x}(\delta_{2^{-k}s})-A(\delta_{2^{-k}s})}{\|3x^{2}-1\|_{L^{2}(\partial B_{1})}(1+2\delta_{2^{-k}s})}(2p_{\delta_{2^{-k}s}}+\kappa(\delta_{2^{-k}s})(y^{2}-z^{2}))}{\tau_{2^{-k}s}+\frac{6A_{x}(\delta_{2^{-k}s})-2A(\delta_{2^{-k}s})}{\|3x^{2}-1\|_{L^{2}(\partial B_{1})}(1+2\delta_{2^{-k}s})}+O(\delta_{2^{-k}s})+O(\Pi(u_{s},2^{-k})^{-\alpha})}\bigg\|_{L^{\infty}(B_{1})}\\
 & \qquad+C(M,\alpha,f,\psi)(\|\Pi(u_{s},2^{-k-1})\|_{L^{\infty}(B_{1})})^{-1-\alpha}\\
\le & \bigg\|\frac{\tau_{2^{-k}s}(O(\delta_{2^{-k}s})+O(\tau_{2^{-k}s}^{-\alpha})}{\tau_{2^{-k}s}(\tau_{2^{-k}s}+\frac{6A_{x}(\delta_{2^{-k}s})-2A(\delta_{2^{-k}s})}{\|3x^{2}-1\|_{L^{2}(\partial B_{1})}(1+2\delta_{2^{-k}s})}+O(\delta_{2^{-k}s})+O(\Pi(u_{s},2^{-k})^{-\alpha}))}\bigg\|_{L^{\infty}(B_{1})}\\
 & \qquad+C(M,\alpha,f,\psi)(\|\Pi(u_{s},2^{-k-1})\|_{L^{\infty}(B_{1})})^{-1-\alpha}\\
\le & C\frac{\delta_{2^{-k}s}}{\|\Pi(u_{s},2^{-k})\|_{L^{\infty}(B_{1})}}+C(M,\alpha)(\|\Pi(u_{s},2^{-k-1})\|_{L^{\infty}(B_{1})})^{-1-\alpha}.
\end{align*}
Iteration of this inequality yields
\begin{align}
 & \bigg\|\frac{\Pi(u_{s},2^{-k})}{\|\Pi(u_{s},2^{-k})\|_{L^{\infty}(B_{1})}}-\frac{\Pi(u_{s},2^{-k-m})}{\|\Pi(u_{s},2^{-k-m})\|_{L^{\infty}(B_{1})}}\bigg\|_{L^{\infty}(B_{1})}\label{eq:sumconvergence}\\
 & \le C\sum_{j=k}^{k+m}\frac{\delta_{2^{-j}s}}{\|\Pi(u_{s},2^{-j})\|_{L^{\infty}(B_{1})}}+C(M,\alpha,f,\psi)\sum_{j=k}^{k+m}(\|\Pi(u_{s},2^{-j-1})\|_{L^{\infty}(B_{1})})^{-1-\alpha}.
\end{align}
From the proof of the previous theorem, we can divide into the cases
$\delta_{2^{-k}s}\ge(\|\Pi(u_{s},2^{-k})\|_{L^{\infty}(B_{1})})^{-c}$
or $\delta_{2^{-k}s}\le(\|\Pi(u_{s},2^{-k})\|_{L^{\infty}(B_{1})})^{-c}$,
and if the latter occurs for some $k$, then $\delta_{2^{-j}s}\le(\|\Pi(u_{s},2^{-j})\|_{L^{\infty}(B_{1})})^{-c}$
for all $j\ge k$, and it holds that such a $k$ can be chosen only
depending on $M$, $f$ and $\psi$: similarly as in the previous
proof,
\[
\delta_{s/2}\le\delta_{s}-C\frac{\delta_{s}}{\|\Pi(u_{s},1)\|_{L^{\infty}(B_{1})}},
\]
if $\delta_{s}\ge(\|\Pi(u_{s},1)\|_{L^{\infty}(B_{1})})^{-c}$. Now
if $\delta_{2^{-j}s}\ge(\|\Pi(u_{s},2^{-j})\|_{L^{\infty}(B_{1})})^{-c}$
for $j<k_{1}$,
\[
\delta_{2^{-k_{1}}s}\le\delta_{s}\prod_{j=0}^{k_{1}-1}\Big(1-\frac{C}{\|\Pi(u_{s},2^{-j})\|_{L^{\infty}(B_{1})}}\Big).
\]
From the Taylorexpansion of $\ln(1-x)$ and Lemma \ref{lem:Pi-growth-at-singular-points-nonconstant},
\begin{align*}
\ln\prod_{j=0}^{k_{1}-1}\Big(1-\frac{C}{\|\Pi(u_{s},2^{-j})\|_{L^{\infty}(B_{1})}}\Big) & =\sum_{j=0}^{k_{1}-1}\Big(1-\frac{C}{\|\Pi(u_{s},2^{-j})\|_{L^{\infty}(B_{1})}}\Big)\\
 & \le-\sum\frac{C}{2\|\Pi(u_{s},2^{-j})\|_{L^{\infty}(B_{1})}}\\
 & \le-\sum\frac{C}{2(\|\Pi(u_{s},1)\|_{L^{\infty}(B_{1})}+\kappa_{0}j)}\\
 & \le-C/2\ln\Big(\frac{k_{1}\kappa_{0}+\|\Pi(u_{s},1)\|_{L^{\infty}(B_{1})}}{\|\Pi(u_{s},1)\|_{L^{\infty}(B_{1})}}\Big)
\end{align*}
so $\delta_{2^{-k_{1}}s}\le\delta_{s}\Big(\frac{\|\Pi(u_{s},1)\|_{L^{\infty}(B_{1})}}{k_{1}\kappa_{0}+\|\Pi(u_{s},1)\|_{L^{\infty}(B_{1})}}\Big)^{C/2}$
and, if $2c\le C/2$,
\begin{align*}
\delta_{2^{-k_{1}}s} & \le\delta_{s}\Big(\frac{\|\Pi(u_{s},1)\|_{L^{\infty}(B_{1})}}{k_{1}\kappa_{0}+\|\Pi(u_{s},1)\|_{L^{\infty}(B_{1})}}\Big)^{C/2}\\
 & \le\delta_{s}\Big(\frac{\|\Pi(u_{s},1)\|_{L^{\infty}(B_{1})}}{k_{1}\kappa_{0}+\|\Pi(u_{s},1)\|_{L^{\infty}(B_{1})}}\Big)^{2c}\\
 & \le\delta_{s}\Big(\frac{\|\Pi(u_{s},1)\|_{L^{\infty}(B_{1})}}{\|\Pi(u_{s},2^{-k_{1}})\|_{L^{\infty}(B_{1})}}\Big)^{2c},
\end{align*}
where again Lemma \ref{lem:Pi-growth-at-singular-points-nonconstant}
is used in the last inequality.

Let $k_{1}\ge0$ be the first number such that $\delta_{2^{-j}s}\le(\|\Pi(u_{s},2^{-j})\|_{L^{\infty}(B_{1})})^{-c}$.
Then, by Lemma \ref{lem:r-halving-lemma}, 
\[
\|\Pi(u_{s},2^{-k_{1}+1})\|_{L^{\infty}(B_{1})}^{-c}\le\delta_{2^{-k_{1}+1}s}\le\delta_{s}\Big(\frac{\|\Pi(u_{s},1)\|_{L^{\infty}(B_{1})}}{\|\Pi(u_{s},2^{-k_{1}+1})\|_{L^{\infty}(B_{1})}}\Big)^{2c}
\]
from which
\[
(k_{1}-1)\frac{\eta_{0}}{2}+\|\Pi(u_{s},1)\|_{L^{\infty}(B_{1})}\le\|\Pi(u_{s},2^{-k_{1}+1})\|_{L^{\infty}(B_{1})}\le\delta_{s}^{1/c}\|\Pi(u_{s},1)\|_{L^{\infty}(B_{1})}^{2}
\]
and
\begin{align*}
k_{1} & \le\frac{2}{\eta_{0}}(\delta_{s}^{1/c}\|\Pi(u_{s},1)\|_{L^{\infty}(B_{1})}^{2}-\|\Pi(u_{s},1)\|_{L^{\infty}(B_{1})})+1\\
 & \le\frac{2}{\eta_{0}}\delta_{s}^{1/c}\|\Pi(u_{s},1)\|_{L^{\infty}(B_{1})}^{2}.
\end{align*}
Applying these results in \eqref{eq:sumconvergence} after letting
$m\to\infty$, 
\begin{align*}
 & \bigg\|\frac{\Pi(u_{s},2^{-k})}{\|\Pi(u_{s},2^{-k})\|_{L^{\infty}(B_{1})}}-p\bigg\|_{L^{\infty}(B_{1})}\\
 & \le C\delta_{s}^{1/c}\|\Pi(u_{s},1)\|_{L^{\infty}(B_{1})}^{2}\sum_{j=k}^{\infty}\|\Pi(u_{s},2^{-j})\|_{L^{\infty}(B_{1})}^{-1-2c}\\
 & \qquad+C(M,\alpha,f,\psi)\sum_{j=k}^{\infty}(\|\Pi(u_{s},2^{-j-1})\|_{L^{\infty}(B_{1})})^{-1-\alpha}\\
 & \le C\sum_{j=k}^{\infty}(\|\Pi(u_{s},2^{-k})\|_{L^{\infty}(B_{1})}+ja\eta_{0}/2)^{-1-2c}\\
 & \le C\|\Pi(u_{s},2^{-k})\|_{L^{\infty}(B_{1})}^{-2c}\\
 & \le C(\|\Pi(u_{s},1)\|_{L^{\infty}(B_{1})}+ka\eta_{0}/2)^{-2c}\\
 & \le C\Big(K(M)+\ln\big(\frac{s}{r}\big)\big)^{-2c},
\end{align*}
for $r\in(2^{-k-1}s,2^{-k}s)$.
\end{proof}
By the uniqueness of limits $\lim_{j\to\infty}\frac{u_{r_{j},\mathbf{x}^{0}}}{\|u_{r_{j},\mathbf{x}^{0}}\|_{L^{\infty}(B_{1})}}$
of the form 
\[
\pm\left(\frac{x^{2}+y^{2}}{2}-z^{2}\right),
\]
we can characterize the free boundary $\partial\{u>\psi\}$ of solutions
at singular points.
\begin{thm}
\label{thm:cone-blowups} Let $u$ solve \eqref{eq:main} in $B_{1}\subset\mathbb{R}^{3}$
for $\|u\|_{L^{\infty}(B_{1})}\le M$ such that $u(0)=|\nabla u(0)|=0$,
$\psi\in C^{1,\alpha}(B_{r})$,$\sup_{0<r\le1/4}|\psi_{r}|\le C_{\psi}$,
and $f$ is Dini continuous with modulus of continuity $\omega_{f}$
and $f(0)=-a$, $a>0$. Assume also that for any $K>0$,
\[
\|u_{r}\|_{L^{\infty}(B_{1})}\ge K
\]
 for $r\le r_{0}(K)$, and that 
\[
\lim_{r\to0}\frac{u_{r}}{\|u_{r}\|_{B_{1}}}=\pm p_{1}(Q\cdot)
\]
for $p_{1}=\frac{x^{2}+y^{2}}{2}-z^{2}$ and $Q\in\mathcal{R}$. Then
there are $r_{0}=r_{0}(M,f,\psi)$ and Lipschitz functions $g$ and
$h$ such that 
\[
\{u=\psi\}\cap B_{R}=\{(x,y,g(x,y)\}\cup\{(x,y,h(x,y)\}\cap B_{r_{0}}.
\]
Also, $\sqrt{2}g-\sqrt{x^{2}+y^{2}}$ and $\sqrt{2}h+\sqrt{x^{2}+y^{2}}$
are $C^{1}$ functions.\end{thm}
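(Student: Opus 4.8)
The plan is to combine the quantitative blow-up convergence from Theorem~\ref{thm:blowups-unique-1} with the implicit function theorem applied to the rescaled solutions. Since $u_r - \Pi(u_r,1) \to Z_{\pm p_1}(Q\cdot)$ and $\Pi(u_r,1)/\|\Pi(u_r,1)\|_{L^\infty(B_1)} \to \pm p_1(Q\cdot)/\|p_1\|_{L^\infty(B_1)}$ with a rate $(K+\ln(s/r))^{-c}$, after rotating we may assume $Q=I$ and the sign is $+$. Write $u_r = \tau_r p_{\delta_r} + Z_{p_{\delta_r}} + (\text{error})$ where $\tau_r = \|\Pi(u_r,1)\|_{L^\infty(B_1)} \propto |\ln r|$ and $\delta_r \to 0$ with the rates above, and the error is $O(\tau_r^{-\alpha})$ in $C^{1,\alpha}_{\mathrm{loc}}$ by Lemma~\ref{lem:ur-Pi-decay} and Lemma~\ref{lem:r-halving-lemma}. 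The key structural fact is that the explicit blow-up $\tau p_1 + Z_{p_1}$ has the property that, on each of the two nappes of the cone $\{(x^2+y^2)/2 = z^2\}$, one has $\partial_z(\tau p_1 + Z_{p_1}) = -2\tau z + \partial_z Z_{p_1} \ne 0$ for $z \ne 0$ and $\tau$ large, because the $-2\tau z$ term dominates the bounded correction $\partial_z Z_{p_1}$; this nondegeneracy in the $z$-direction is what lets us solve $\{u=\psi\}$ for $z$ as a function of $(x,y)$ near the upper nappe and near the lower nappe separately.

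The key steps, in order: (1) Fix a small dyadic annulus $A_j = B_{2^{-j}s}\setminus B_{2^{-j-1}s}$ and rescale: on $B_1\setminus B_{1/2}$ the function $\frac{u_{2^{-j}s} - \psi_{2^{-j}s}}{\tau_j}$ is $C^{1,\alpha}$-close to $p_1 + \tau_j^{-1}Z_{p_1}$, uniformly with an error going to zero, since $\psi_{2^{-j}s}/\tau_j \to 0$ by the assumption $\sup|\psi_r|\le C_\psi$ and $\tau_j\to\infty$. (2) On the region $\{(x,y,z): \tfrac{1}{4} \le |z| \le 4,\ \tfrac12 \le |(x,y,z)| \le 1\}$ intersected with a neighbourhood of the upper nappe of $\{(x^2+y^2)/2 = z^2\}$, apply the implicit function theorem to $\Phi_j(x,y,z) = \frac{u_{2^{-j}s}(x,y,z) - \psi_{2^{-j}s}(x,y,z)}{\tau_j}$: since $\partial_z\Phi_j$ is uniformly bounded below in modulus on this region (it converges to $\partial_z p_1 = -2z$, which is bounded away from zero there), we get $\{u_{2^{-j}s} = \psi_{2^{-j}s}\}$ in that region as a Lipschitz graph $z = g_j(x,y)$ with Lipschitz constant controlled by $\|\nabla_{x,y}\Phi_j\|/|\partial_z\Phi_j|$, hence uniformly bounded. (3) Undo the scaling: $g(x,y) := 2^{-j}s\, g_j(2^j x/s, 2^j y/s)$ patches across the annuli (consistency on overlaps follows because the zero set is the same set), giving a global Lipschitz $g$ on $B_{r_0}\setminus\{0\}$ describing the upper part of $\{u=\psi\}$, and similarly $h$ for the lower nappe; near the origin one appeals to the nondegeneracy/growth estimate \eqref{eq:u-growth} to see the two graphs close up. (4) For the $C^1$ statement: $\sqrt{2}\,g_j(x,y) - \sqrt{x^2+y^2} \to 0$ because the leading cone $\{z = \sqrt{(x^2+y^2)/2}\} = \{\sqrt{2}z = \sqrt{x^2+y^2}\}$ is exactly cancelled; what remains is of size $\tau_j^{-1}\|Z_{p_1}\|_{C^1} + \tau_j^{-\alpha} + (\text{blow-up rate})$, and crucially the \emph{derivatives} $\nabla_{x,y}(\sqrt{2}g_j - \sqrt{x^2+y^2})$ satisfy a Dini-type summable bound over the dyadic scales $j$ because $\tau_j^{-1-\alpha}$ and $\delta_j/\tau_j$ are summable (this is the same summability that produced the $(K+\ln(s/r))^{-c}$ rate in Theorem~\ref{thm:blowups-unique-1}), so the rescaled derivatives converge and the limit function $\sqrt{2}g - \sqrt{x^2+y^2}$ is $C^1$; the same for $\sqrt{2}h + \sqrt{x^2+y^2}$ using the lower nappe.

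The main obstacle I expect is step~(3)–(4): turning the scale-by-scale Lipschitz graphs into a single graph that is Lipschitz up to (and including the behaviour at) the vertex of the cone, and then upgrading to $C^1$ after subtracting off $\pm\sqrt{(x^2+y^2)/2}$. The Lipschitz patching is routine once the uniform bounds hold, but the $C^1$ regularity of $\sqrt{2}g - \sqrt{x^2+y^2}$ requires genuinely summing the perturbation estimates across all dyadic annuli accumulating at the origin — i.e. one must show the correction to the cone has a \emph{modulus of continuity} at $0$, not merely that it vanishes at $0$. This is exactly where Dini-continuity of $f$ (hence the non-Dini logarithmic growth $\tau_j \propto j$, which makes $\sum \tau_j^{-1-\alpha} < \infty$) is used, mirroring the mechanism in Lemma~\ref{lem:ur-Pi-decay} and Theorem~\ref{thm:blowups-unique-1}; without it the graphs would only be Lipschitz. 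The rest — the implicit function theorem on the rescaled equation and the nondegeneracy $\partial_z \Phi_j \not\to 0$ away from $z=0$ — is a direct consequence of the explicit form of $Z_{p_1}$ and the estimates already established.
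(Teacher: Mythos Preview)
Your outline matches the paper's: normalise to $Q=I$ and sign $+$; on each dyadic annulus use $v^r:=(u_r-\psi_r)/\|u_r\|_{L^\infty(B_1)}\to p_1$ in $C^{1,\alpha}$ together with $|\partial_z p_1|=2|z|$ being bounded below on $\overline B_1\setminus B_{1/2}$ near the cone to run the implicit function theorem, obtain $C^{1,\alpha}$ graphs $g^r$ with uniform norms, glue across scales, and set $g(0,0)=0$. The divergence is in your step~(4): you invoke the quantitative rate of Theorem~\ref{thm:blowups-unique-1} and propose to sum $\tau_j^{-1-\alpha}$ and $\delta_j/\tau_j$ over dyadic scales to get a Dini-type modulus. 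That would work, but it is more than is needed, and the paper takes a shorter route. Since the rescaled graphs $g(r\cdot)/r$ are \emph{uniformly bounded} in $C^{1,\alpha}(\overline D_1\setminus D_{1/2})$ and converge in $C^0$ to $\sqrt{x^2+y^2}/\sqrt{2}$, Arzel\`a--Ascoli/interpolation upgrades this to $C^{1,\beta}$ convergence for any $\beta<\alpha$; this gives $\nabla\big(\sqrt{2}g-\sqrt{x^2+y^2}\big)\to 0$ as $(x,y)\to 0$ directly, and the $C^1$ claim follows. No summability across scales is required at this stage; the Dini hypothesis on $f$ is used upstream (Lemmas~\ref{lem:Pi-growth-at-singular-points-nonconstant}--\ref{lem:r-halving-lemma}) to secure the blow-up classification, not in the graph patching. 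Relatedly, your asserted expansion $u_r=\tau_r p_{\delta_r}+Z_{p_{\delta_r}}+O(\tau_r^{-\alpha})$ in $C^{1,\alpha}$ is stronger than what the lemmas actually deliver (they control $L^2$ and $\Pi$-quantities, not $C^{1,\alpha}$ of the remainder) and stronger than what the argument needs: the bare convergence $v^r\to p_1$ in $C^{1,\alpha}$, which follows from Lemma~\ref{lem:projection-C1alpha} and $\tau_r\to\infty$, already suffices.
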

\begin{proof}
By Theorem \ref{thm:blowups-unique-1} we can without loss of generality
assume the limit to be $p_{1}:=\frac{x^{2}+x^{2}}{2}-z^{2}$. Since
$\frac{u_{r}-\psi_{r}}{\|u_{r}\|_{L^{\infty}(B_{1})}}\fd v^{r}\to p_{1}$
in $C^{1,\alpha}(\overline{B}_{1})$, and $|\nabla p|\ge1/2$ on $\overline{B}_{1}\backslash B_{1/2}$,
\[
\{v^{r}=0\}\cap(\overline{B}_{1}\backslash B_{1/2})\subseteq\{\mathbf{x}'\,:\,\dist(\mathbf{x}',x^{2}+y^{2}=2z^{2})<\sigma(r)\}\cap(\overline{B}_{1}\backslash B_{1/2}),
\]
for some modulus of continuity $\sigma$, and consequently $\frac{\partial v^{r}}{\partial z}\le-1/2$
on $\{u(r\mathbf{x})=\psi(r\mathbf{x})\}\cap(\overline{B}_{1}\backslash B_{1/2})$
for $r$ small enough depending on $\sigma$. From the implicit function
theorem and the $C^{1,\alpha}$-regularity of $u$ and $\psi$, $\{\mathbf{x}\,:\,u(r\mathbf{x})=\psi(r\mathbf{x}\}\cap(\overline{B}_{1}\backslash B_{1/2})$
can be expressed as a function $g^{r}(x,y)$ which is $C^{1,\alpha}(\overline{D}_{1}\backslash D_{1/2})$
with $C^{1,\alpha}$-norm independent of $r$, where $D_{r}=\{(x,y)\in\mathbb{R}^{2}\,:\,x^{2}+y^{2}<r^{2}\}$.
Let $g$ be the function given by gluing together such $g^{r}$, and
define $g(0,0)=0$. Then $g$ is Lipschitz in $D_{r_{0}}$ and $g\in C^{1,\alpha}(\overline{D}_{r_{0}}\backslash D_{s})$
for any $s>0$, $\frac{g(rx,ry)}{r}$ is bounded in $C^{1,\alpha}(\overline{D}_{1}\backslash D_{1/2})$
uniformly with respect to $r$ and $\frac{g(rx,ry)}{r}\to\frac{\sqrt{x^{2}+y^{2}}}{\sqrt{2}}$
in $C(\overline{D}_{1}\backslash D_{1/2})$ from which it follows
that $\frac{g(rx,ry)}{r}\to\frac{\sqrt{x^{2}+y^{2}}}{\sqrt{2}}$ in
$C^{1,\beta}(\overline{D}_{1}\backslash D_{1/2})$, $\beta<\alpha$.
Finally, for $(x,y)\to0$, we infer that 
\[
\bigg|\nabla g(x,y)-\nabla\frac{\sqrt{x^{2}+y^{2}}}{\sqrt{2}}\bigg|=\bigg|\nabla\frac{g(rx',ry')}{r}-\nabla\frac{\sqrt{x'^{2}+y'^{2}}}{\sqrt{2}}\bigg|\to0,\qquad x',y'\in\partial D_{1},
\]
as $r\to0$ and we conclude.
\end{proof}
As mentioned, similar results are proven for limits $\lim_{j\to\infty}\frac{u_{r_{j}}}{\|u_{r_{j}}\|_{L^{\infty}(B_{1})}}$
of the form $x^{2}-z^{2}$, even though the analysis is somewhat more
subtle due to the instability of the singularity \cite{MR2286633}.
\begin{thm}
[Corresponds to Theorem 11.1 in \cite{ASW2}]\label{thm:unstable-case}
Let $u$ solve \eqref{eq:main} in $B_{1}\subset\mathbb{R}^{3}$ for
$\|u\|_{L^{\infty}(B_{1})}\le M$ such that $u(0)=|\nabla u(0)|=0$,
$\psi\in C^{1,\alpha}(B_{r_{\psi}})$ for some $r_{\psi}>0$, $\sup_{0<r\le1/4}|\psi_{r}|\le C_{\psi}$,
and $f$ is Dini continuous with modulus of continuity $\omega_{f}$
and $f(0)=-a$, $a>0$. Assume also that for any $K>0$,
\[
\|u_{r}\|_{L^{\infty}(B_{1})}\ge K
\]
 for $r\le r_{0}(K)$. If 
\[
\lim_{j\to\infty}u_{r_{j}}-\Pi(u_{r_{j}},1)=Z_{p_{3}}(Q\cdot)
\]
for $p_{3}=x^{2}-z^{2}$, $Q\in\mathcal{R}$ and some sequence $\{r_{j}\}$,
then 
\[
\lim_{r\to0}u_{r}-\Pi(u_{r},1)=Z_{p_{3}}(Q\cdot).
\]
Furthermore, for any $c>0$, $\{u=\psi\}\cap K_{c}$ is contained
two $C^{1}$ manifolds intersecting orthogonally, where $K_{c}=\{Q\mathbf{x}\,:\,y^{2}<c(x^{2}+z^{2})\}$.
\end{thm}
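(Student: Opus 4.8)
The plan is to prove, in order, that $\delta_{r}\df\delta(u_{r})\to1/2$, that the rotations $Q(r)$ defined by $\Pi(u_{r},1)/\tau_{r}=p_{\delta_{r}}(Q(r)\,\cdot)$ converge (writing $\tau_{r}\df\|\Pi(u_{r},1)\|_{L^{\infty}(B_{1})}$), and only then the structure of $\{u=\psi\}$ inside $K_{c}$. For the first claim I would run the dichotomy of the proof of Theorem \ref{thm:symmetric-blowups}, built on the halving identity of Lemma \ref{lem:r-halving-lemma}, $\Pi(u_{r},1/2)=\Pi(u_{r},1)+\Pi(Z_{\Pi(u_{r},1)},1/2)+E_{r}$ with $\|E_{r}\|_{L^{\infty}(B_{1})}\le C\tau_{r}^{-\alpha}$, and on $\tau_{r}\asymp|\ln r|$, $\tau_{2^{-k}r}\ge\tau_{r}+\kappa_{0}k$ from Lemma \ref{lem:Pi-growth-at-singular-points-nonconstant}. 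By Theorem \ref{thm:symmetric-blowups} every blow-up is $\pm Z_{p_{1}}(Q\cdot)$ or $Z_{p_{3}}(Q\cdot)$; the hypothesis gives a subsequential blow-up $Z_{p_{3}}(Q\cdot)$, which is not $\pm Z_{p_{1}}(Q'\cdot)$ for any $Q'$, so Theorem \ref{thm:blowups-unique-1} forbids $\delta_{s}\le c$ at any small scale, i.e. $\delta_{s}>c$ for all $s$ below some radius. If $\delta_{s_{k}}\le1/2-\beta$ along some $s_{k}\to0^{+}$, then, since by Lemma \ref{lem:Coefficient-estimates} $\kappa(\delta)$ is bounded below for $\delta$ bounded away from $0$ and $1/2$, the estimate \eqref{eq:delta-r/2} iterated from $s_{k}$ yields $\delta_{2^{-l}s_{k}}\le\delta_{s_{k}}-c(\beta)\sum_{m=0}^{l-1}(|\ln s_{k}|+\kappa_{0}m)^{-1}+C|\ln s_{k}|^{-\alpha}\to-\infty$ as $l\to\infty$, which is absurd; hence $\delta_{s}>1/2-\beta$ for all small $s$, and since $\beta$ was arbitrary and $\delta_{s}\le1/2$ always, $\delta_{s}\to1/2$.

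To upgrade this to convergence of $Q(r)$ I would use that $\{p_{\delta}>0\}$ is invariant under each coordinate reflection, so $Z_{p_{\delta}}$ is even in each variable and $\Pi(Z_{p_{\delta}},1/2)$ is diagonal in the $Q(r)$-axes; as $Z_{p}$ depends on $p$ only through $\{p>0\}$ and $\Pi$ is rotation equivariant, the halving identity becomes $\Pi(u_{r},1/2)=\bigl[\tau_{r}p_{\delta_{r}}+\Pi(Z_{p_{\delta_{r}}},1/2)\bigr](Q(r)\,\cdot)+E_{r}$ with the bracket a diagonal quadratic form having eigenvalues approximately $\tau_{r}(1-\eta_{r})$, $\tau_{r}\eta_{r}$, $-\tau_{r}$ where $\eta_{r}\df1/2-\delta_{r}\to0$, hence pairwise gaps $\gtrsim\tau_{r}$. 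An eigenvector perturbation estimate then gives $|Q(r/2)-Q(r)|\le C\|E_{r}\|_{L^{\infty}(B_{1})}/\tau_{r}\le C|\ln r|^{-1-\alpha}$, which, summed over the dyadic scales $2^{-k}r$ (whose $\tau$-values grow at least linearly in $k$), is finite, so $Q(r)\to Q_{\infty}$; comparison with the hypothesis forces $Q_{\infty}=Q$ modulo the finite symmetry group of $p_{3}$, which does not change $Z_{p_{3}}(Q\cdot)$. Thus $\Pi(u_{r},1)/\tau_{r}\to p_{3}(Q\cdot)$, and since by Lemma \ref{lem:projection-C1alpha} any $C^{1,\alpha}_{\mathrm{loc}}$ subsequential limit of $u_{r}-\Pi(u_{r},1)$ solves $\Delta v=-a\chi_{\{p_{3}(Q\cdot)>0\}}$ and meets the normalisation of \eqref{eq:blow-up-problem}, uniqueness for that problem forces $u_{r}-\Pi(u_{r},1)\to Z_{p_{3}}(Q\cdot)$.

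For the free boundary I would copy the proof of Theorem \ref{thm:cone-blowups}, localised to $K_{c}$. After applying $Q$ we may take $Q=I$, so $K_{c}=\{y^{2}<c(x^{2}+z^{2})\}$, the blow-up is $p_{3}=x^{2}-z^{2}$, its zero set is the union of the orthogonal planes $\{x=z\}$ and $\{x=-z\}$, and $|\nabla p_{3}|^{2}=4(x^{2}+z^{2})\ge4(1+c)^{-1}|\mathbf{x}|^{2}$, so $|\nabla p_{3}|$ is bounded below on $K_{c}\cap(\overline{B}_{1}\setminus B_{1/2})$ — the $y$-axis having been excluded. From the first two steps, $\tfrac{u_{r}-\psi_{r}}{\|u_{r}\|_{L^{\infty}(B_{1})}}\to\tfrac{p_{3}}{\|p_{3}\|_{L^{\infty}(B_{1})}}$ in $C^{1,\alpha}(\overline{B}_{1})$ (using the quadratic vanishing and $C^{1,\alpha}$ regularity of $\psi$ as in Theorem \ref{thm:cone-blowups}), so for $r$ small $\nabla\tfrac{u_{r}-\psi_{r}}{\|u_{r}\|}$ is bounded below on $K_{c}\cap(\overline{B}_{1}\setminus B_{1/2})$ and transversal to each plane. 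The implicit function theorem and the $C^{1,\alpha}$ regularity of $u$ and $\psi$ then present $\{u=\psi\}$ at the scale $r$ inside $K_{c}$ as two $C^{1,\alpha}$ graphs with $r$-uniform norms, one near $\{x=z\}$ and one near $\{x=-z\}$, and there is no other free boundary in $K_{c}$ since $|x^{2}-z^{2}|$ is bounded below off the two planes. Gluing these graphs over the dyadic annuli, and using that the rescalings converge in $C^{1,\beta}$ to the corresponding plane, produces — as at the end of the proof of Theorem \ref{thm:cone-blowups} — two $C^{1}$ manifolds $M_{1},M_{2}$ through the origin with $T_{0}M_{1}=\{x=z\}$, $T_{0}M_{2}=\{x=-z\}$, hence orthogonal, and $\{u=\psi\}\cap K_{c}\cap B_{r_{0}}\subseteq M_{1}\cup M_{2}$.

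The hard part will be the first step. In Theorem \ref{thm:blowups-unique-1} the fixed point $\delta=0$ is attracting, which makes an (almost-)monotone decay of $\delta_{r}$ available; here $\delta=1/2$ is unstable, there is no corresponding decay of $1/2-\delta_{r}$, and one must instead feed the conditional statement of Theorem \ref{thm:blowups-unique-1} (which keeps $\delta_{s}$ away from $0$) back into the coefficient estimates of Lemma \ref{lem:Coefficient-estimates} (which would otherwise push $\delta_{s}$ toward $0$ the moment it is bounded away from both endpoints). The remaining delicate issue is geometric: exactly along the $y$-axis, where $\nabla p_{3}=0$ and the two sheets of $\{p_{3}>0\}$ meet, transversality of the blow-up is lost and no information on $\{u=\psi\}$ is available there — which is precisely why the free-boundary conclusion is stated only inside $K_{c}$.
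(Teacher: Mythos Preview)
Your proposal is correct and follows essentially the same route as the paper. The paper does not write out a proof of this theorem; it states at the beginning of \S\ref{sec:Extension-of-Results} that the argument is that of \cite[Theorem~11.1]{ASW2} with Lemma~\ref{lem:projection-C1alpha}, Lemma~\ref{lem:Pi-growth-at-singular-points-nonconstant} and Lemma~\ref{lem:r-halving-lemma} substituted for the corresponding results there, and your three steps --- forcing $\delta_{r}\to1/2$ via the halving identity and the contrapositive of Theorem~\ref{thm:blowups-unique-1}, summing the dyadic rotation increments using the spectral gap $\gtrsim\tau_{r}$ (guaranteed once $\delta_{r}>c$), and then running the implicit-function argument of Theorem~\ref{thm:cone-blowups} on annuli inside $K_{c}$ --- are exactly that substitution carried out.
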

Finally we state the structural properties of the singular set $S^{u}$,
which we practically split up as $S^{u}=S_{1}^{u}\cup S_{2}^{u}$
for
\begin{align*}
S_{1}^{u} & \df\bigg\{\mathbf{x}^{0}\in B_{1/2}\,:\,\lim_{r\to0}\frac{u_{r,\mathbf{x}^{0}}(Q\cdot)}{\|u_{r,\mathbf{x}^{0}}\|_{L^{\infty}(B_{1})}}\pm\bigg(\frac{x{}^{2}+y^{2}}{2}-z^{2}\bigg),\,Q\in\mathcal{R}\bigg\},\\
S_{2}^{u} & \df\bigg\{\mathbf{x}^{0}\in B_{1/2}\,:\,\lim_{r\to0}\frac{u_{r,\mathbf{x}^{0}}(Q\cdot)}{\|u_{r,\mathbf{x}^{0}}\|_{L^{\infty}(B_{1})}}x^{2}-z^{2},\,Q\in\mathcal{R}\bigg\}.
\end{align*}
 The results are that singular points in $S_{1}^{u}$ are isolated
while $S_{2}^{u}$ is locally contained in a $C^{1}$ curve.
\begin{thm}
\label{thm:structure-singularset} Let $u$ solve \eqref{eq:main}
in $B_{1}\subset\mathbb{R}^{3}$ for $\|u\|_{L^{\infty}(B_{1})}\le M$
such that $u(\mathbf{x}^{0})=|\nabla u(\mathbf{x}^{0})|=0$, $\psi\in C^{1,\alpha}(B_{r_{\psi}})$
for some $r_{\psi}>0$, $\sup_{0<r\le1/4}|\psi_{r}|\le C_{\psi}$,
and $f$ is Dini continuous with modulus of continuity $\omega_{f}$
and $f(\mathbf{x}^{0})=-a$, $a>0$. Then
\begin{description}
\item [{(i)}] if $x^{0}\in S_{1}^{u}$ , then $x^{0}$ is an isolated singular
point
\item [{(ii)}] if $x^{0}\in S_{2}^{u}$ , then $S_{2}^{u}\cap B_{r}(u)$
is contained in a $C^{1}$ curve.
\end{description}
\end{thm}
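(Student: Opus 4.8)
The plan is to exploit the uniqueness and quantitative convergence rate of blow-ups established in Theorem~\ref{thm:blowups-unique-1} and (its counterpart for the $x^2-z^2$ case) Theorem~\ref{thm:unstable-case}, together with the logarithmic growth of $\|\Pi(u_r,1)\|_{L^\infty(B_1)}$ from Lemma~\ref{lem:Pi-growth-at-singular-points-nonconstant}. The key structural input is that the map $\mathbf{x}^0\mapsto \Pi(u_{r,\mathbf{x}^0},1)/\|\Pi(u_{r,\mathbf{x}^0},1)\|_{L^\infty(B_1)}$, for fixed small $r$, should depend continuously on $\mathbf{x}^0\in S^u$; this continuity comes from the $C^{1,\alpha}$-estimate \eqref{eq:ur-Piur-C1alpha} of Lemma~\ref{lem:projection-C1alpha} combined with continuity of $\mathbf{x}^0\mapsto u_{r,\mathbf{x}^0}$ in $C^1$ (using $u\in C^{1,\alpha}$ and that $u$, $\nabla u$ vanish on $S^u$). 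Thus on $S^u$ the normalized projection is a continuous $\mathbb{HP}_2$-valued function, and the partition $S^u=S_1^u\cup S_2^u$ according to the type of blow-up is compatible with a genuine parametrization of the blow-up by the rotated polynomial $p_\delta$.

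For part~(i), the approach is: suppose $\mathbf{x}^0\in S_1^u$ and that there were a sequence $\mathbf{x}^j\to\mathbf{x}^0$ of singular points, $\mathbf{x}^j\neq\mathbf{x}^0$, with $|\mathbf{x}^j-\mathbf{x}^0|=:d_j\to0$. Blowing up at $\mathbf{x}^0$ along the scale $d_j$, one gets (along a subsequence) a blow-up limit of $u$ at $\mathbf{x}^0$, which by hypothesis is $\pm p_1(Q\cdot)$; but $\mathbf{x}^j$ blown up to the unit sphere would force a second singular point of the blow-up at a point of $\partial B_1$. By Theorem~\ref{thm:cone-blowups} the zero set $\{u=\psi\}$ near $\mathbf{x}^0$ is, after the change of variables, two Lipschitz graphs $z=g(x,y)$ and $z=h(x,y)$ whose rescalings converge to the two nappes of the cone $x^2+y^2=2z^2$; since on $\partial B_1$ the gradient of $p_1$ is bounded below, the convergence $u_{r}\to\pm p_1$ in $C^{1,\alpha}$ is nondegenerate away from the origin, so $u$ cannot have vanishing gradient on the cone away from $0$ after rescaling — hence no singular point other than $\mathbf{x}^0$ can survive the blow-up, a contradiction. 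More precisely: a singular point $\mathbf{x}^j$ satisfies $u(\mathbf{x}^j)=|\nabla u(\mathbf{x}^j)|=0$, so $\mathbf{x}^j\in\{u=\psi\}$ and $\nabla(u-\psi)(\mathbf{x}^j)=-\nabla\psi(\mathbf{x}^j)$; but the rescaled solution's gradient on the (near-)cone is, by the nondegeneracy, of size comparable to $\|u_{d_j}\|_{L^\infty(B_1)}\to\infty$ times the distance scale, which cannot be balanced — giving isolation.

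For part~(ii), the approach is to show that for $\mathbf{x}^0\in S_2^u$ the blow-up $\pm(x^2-z^2)(Q\cdot)$ has a one-dimensional kernel direction (the $y$-axis in the rotated frame), and that nearby singular points must, to leading order, lie along that direction. Concretely: by Theorem~\ref{thm:unstable-case} the contact set $\{u=\psi\}\cap K_c$ near $\mathbf{x}^0$ is contained in two $C^1$ manifolds meeting orthogonally along a $C^1$ curve $\Gamma$ tangent to the $y$-axis; any singular point $\mathbf{x}^j$ near $\mathbf{x}^0$ must lie on $\{u=\psi\}$, and — since at a singular point the blow-up is again two-homogeneous and of type $x^2-z^2$, i.e.\ $\mathbf{x}^j$ is a "corner" of the contact set rather than a smooth point of one of the two sheets — $\mathbf{x}^j$ must lie on the intersection curve $\Gamma$. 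One then checks, using the quantitative rate in Theorem~\ref{thm:unstable-case} (and the continuity of the blow-up map on $S^u$), that $S_2^u$ is relatively closed and locally coincides with a subset of this $C^1$ curve $\Gamma$; hence $S_2^u\cap B_r(\mathbf{x}^0)$ is contained in a $C^1$ curve.

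The main obstacle I expect is making rigorous the claim that a nearby singular point must be a "corner" of the contact set (hence forced onto the edge $\Gamma$) rather than a smooth point of one of the two sheets: this requires ruling out, via the classification of blow-ups in Theorem~\ref{thm:symmetric-blowups} together with the growth estimates, that a singular point could sit on a smooth part of $\{u=\psi\}$ where the blow-up would have to degenerate to a halfplane-type object incompatible with the two-homogeneous-polynomial dichotomy. The remaining care is bookkeeping: transferring the $r\to0$ rates at the fixed base point $\mathbf{x}^0$ to uniform-in-base-point statements on a neighborhood in $S^u$, which is where the continuity of $\mathbf{x}^0\mapsto\Pi(u_{r,\mathbf{x}^0},1)/\|\Pi(u_{r,\mathbf{x}^0},1)\|_{L^\infty(B_1)}$ and the openness/closedness of the strata $S_1^u,S_2^u$ in $S^u$ do the work. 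These steps mirror the corresponding arguments in \cite{ASW2}, with Lemma~\ref{lem:Pi-growth-at-singular-points-nonconstant}, Lemma~\ref{lem:r-halving-lemma}, Theorem~\ref{thm:blowups-unique-1} and Theorem~\ref{thm:unstable-case} substituted for their constant-coefficient, zero-obstacle analogues.
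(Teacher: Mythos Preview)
Your proposal is correct and takes essentially the same approach as the paper: the paper gives no explicit proof of Theorem~\ref{thm:structure-singularset} but simply defers to \cite{ASW2} with Lemmas~\ref{lem:projection-C1alpha}, \ref{lem:Pi-growth-at-singular-points-nonconstant}, \ref{lem:r-halving-lemma} and Theorems~\ref{thm:blowups-unique-1}, \ref{thm:unstable-case} substituted for their constant-coefficient analogues, and your sketch is precisely an outline of that ASW2 argument with those substitutions. Your closing paragraph already identifies this correspondence explicitly, so nothing further is needed.
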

\bibliographystyle{amsalpha}
\bibliography{References}

\end{document}